\documentclass[preprint,10pt]{elsarticle}
\usepackage{amsmath}
\usepackage{amssymb}
\usepackage{amsthm}
\usepackage{indentfirst}
\usepackage[latin1]{inputenc}
\usepackage{geometry}
\usepackage{amsfonts}
\usepackage{enumitem}
\usepackage{cancel}
\usepackage[bookmarks=false,colorlinks=true,linkcolor=black,citecolor=black,filecolor=black,urlcolor=black]{hyperref}

\DeclareSymbolFont{largesymbols}{OMX}{yhex}{m}{n}
\DeclareMathAccent{\widehat}{\mathord}{largesymbols}{"62}

\evensidemargin=0in \oddsidemargin=0in

\newcommand{\MC}{G} 
\newcommand{\Q}{{\mathbb Q}}
\newcommand{\Z}{{\mathbb Z}}
\newcommand{\C}{{\mathbb C}}

\newcommand{\R}{{\mathbb R}}

\newcommand{\U}{\mathcal{U}}
\renewcommand{\O}{\mathcal{O}}
\newcommand{\Gal}{\textnormal{Gal}}
\newcommand{\vect}{\textnormal{vect}}
\newcommand{\Aut}{\textnormal{Aut}}
\newcommand{\End}{\textnormal{End}}
\newcommand{\Irr}{\textnormal{Irr}}
\newcommand{\GL}{\textnormal{GL}}
\newcommand{\SL}{\textnormal{SL}}
\newcommand{\tr}{\textnormal{tr}}
\newcommand{\Bass}[3]{u_{#1,#2}(#3)}
\newcommand{\suma}[1]{\widehat{#1}}
\newcommand{\Hc}{{\mathcal{H}}}
\newcommand{\inv}{{^{-1}}}
\newcommand{\GEN}[1]{\left\langle #1 \right\rangle}

\newtheorem{theorem}{Theorem}[section]

\newtheorem{lemma}[theorem]{Lemma}
\newtheorem{corollary}[theorem]{Corollary}
\newtheorem{proposition}[theorem]{Proposition}
\newtheorem{remark}[theorem]{Remark}

\newtheorem{example}[theorem]{Example}

\begin{document}

\begin{frontmatter}

\journal{Journal of Algebra}

\title{Group rings of finite strongly monomial groups: central units and primitive idempotents\tnoteref{tit}}

\tnotetext[tit]{The research is partially supported by the Research Foundation Flanders (FWO - Vlaanderen), Onderzoeksraad Vrije Universiteit Brussel, the grant PN-II-RU-TE-2009-1 project ID\_303 and by the Ministerio de Ciencia y Tecnolog\'{\i}a of Spain and Fundaci\'{o}n S\'{e}neca of Murcia.}

\author{Eric Jespers}
\address{Department of Mathematics, Vrije Universiteit Brussel,
Pleinlaan 2, 1050 Brussels, Belgium}
\ead{efjesper@vub.ac.be}

\author{Gabriela Olteanu}
\address{Department of Statistics-Forecasts-Mathematics, Babe\c s-Bolyai University,
Str. T. Mihali 58-60, 400591 Cluj-Napoca, Romania}
\ead{gabriela.olteanu@econ.ubbcluj.ro}

\author{\'{A}ngel del R\'io}
\address{Departamento de Matem\'{a}ticas, Universidad de Murcia,  30100 Murcia, Spain}
\ead{adelrio@um.es}

\author{Inneke Van Gelder \corref{cor}}
\address{Department of Mathematics, Vrije Universiteit Brussel,
Pleinlaan 2, 1050 Brussels, Belgium}
\ead{ivgelder@vub.ac.be}
\cortext[cor]{Corresponding author: Phone: +32 2 629 3551}

\begin{keyword}
group rings \sep central units \sep generators \sep orthogonal primitive idempotents

\MSC[2000]  16S34 \sep 16U60 \sep 20C05
\end{keyword}

\date{\today}

\begin{abstract}
We compute the rank of the group of central units in the integral group ring $\Z G$ of a finite strongly monomial group $G$. The formula obtained is in terms of the strong Shoda pairs of $G$.
Next we construct a virtual basis of the group of central units of $\Z G$ for a class of groups $G$ properly contained in the finite strongly monomial groups. Furthermore, for another class of groups $G$ inside the finite strongly monomial groups, we give an explicit construction of a complete set of orthogonal primitive idempotents of $\Q G$.

Finally, we apply these results to describe finitely many generators of a subgroup of finite index in the group of units of $\Z G$, this for metacyclic groups $G$ of the form $G=C_{q^m}\rtimes C_{p^n}$ with $p$ and $q$ different primes and the cyclic group $C_{p^n}$ of order $p^n$ acting faithfully on the cyclic group $C_{q^m}$ of order $q^m$.
\end{abstract}

\end{frontmatter}

\section{Introduction}

For a finite group $G$ we denote by $\U(\Z G)$ the unit group of the integral group ring $\Z G$. Its group of central units is denoted by $\mathcal{Z}(\U(\Z G))$. It is well-known that $\mathcal{Z}(\U(\Z G))=\pm \mathcal{Z}(G) \times T$, where $T$ is a finitely generated free abelian group (see for example \cite[Corollary 7.3.3]{2002SehgalMilies}). In \cite{2005RitterSehgal} it is proved that the rank of the subgroup $T$ of $\mathcal{Z}(\U(\Z G))$ is the difference between the number of simple components of $\R G$ and the number of simple components of $\Q G$. For a finite strongly monomial group $G$, we
will give a description of this rank in terms of the strong Shoda pairs of $G$ (Theorem~\ref{rank}). Examples of strongly monomial groups are abelian-by-supersolvable groups \cite{Olivieri2004}. All monomial groups of order smaller than 1000 are strongly monomial and the smallest monomial non-strongly-monomial group is a group of order 1000, the 86-th one in the library of the GAP system \cite{2007Olteanu,GAP}.

Bass proved that if $C$ is a finite cyclic group, then the so-called Bass (cyclic) units generate a subgroup of finite index in $\U(\Z C)$ \cite{bass1966}. Using the Bass Independence Lemma, he also described a virtual basis for $\U(\Z C)$, i.e. an independent set of generators for a subgroup of finite index in $\U(\Z C)$. In these investigations the cyclotomic units show up and therefore the Bass units are a natural choice since they project to powers of cyclotomic units in each simple component of $\Q C$. Next Bass and Milnor proved this result for finite abelian groups. Their proof makes use of K-theory in order to reduce to group rings of cyclic groups. However, for arbitrary finite abelian groups, they did not describe an independent set of generators. Only recently such a virtual basis was described in \cite{2012JdRVG}. Its constructive proof is based on a delicate induction argument and hence avoids the use of K-theory and the Bass Independence Lemma.

In \cite{2012JdROVG} we have proved that the group generated by the so-called generalized Bass units contains a subgroup of finite index in $\mathcal{Z}(\U(\Z G))$ for any arbitrary finite strongly monomial group $G$. Note that no multiplicatively independent set for such a subgroup was obtained. However, we obtained an explicit description of a virtual basis of $\mathcal{Z}(\U(\Z G))$ when $G$ is a finite abelian-by-supersolvable group (and thus a strongly monomial group) such that every cyclic subgroup of order not a divisor of 4 or 6 is subnormal in $G$. Note that the latter does not apply to all finite split metacyclic groups $C_m\rtimes C_n$, for example if $n$ is a prime number and $C_m\rtimes C_n$ is not abelian then $C_n$ is not subnormal in $C_m\rtimes C_n$. On the other hand, Ferraz and Sim\'on did construct in \cite{FerrazSimon2008} a virtual basis of $\mathcal{Z}(\U(\Z (C_q\rtimes C_p)))$ for $p$ and $q$ odd and different primes. In our second main result (Theorem~\ref{basis}) we extend these results on the construction of a virtual basis of $\mathcal{Z}(\U(\Z G))$ to a class of finite strongly monomial groups containing the metacyclic groups $G=C_{q^m}\rtimes C_{p^n}$ with $p$ and $q$ different primes and $C_{p^n}$ acting faithfully on $C_{q^m}$. Our proof makes again use of strong Shoda pairs and the description of the Wedderburn decomposition of $\Q G$ obtained by Olivieri, del R\'io and Sim\'on in \cite{Olivieri2004}. Our approach is thus different from the one used in \cite{FerrazSimon2008}.

In \cite{Jespers2010} a complete set of matrix units (and in particular, of orthogonal primitive idempotents) of each simple component in the rational group algebra $\Q G$ is described for finite nilpotent groups $G$. The same is done in \cite{2011vangelder} for semisimple group algebras $F G$ of finite nilpotent groups $G$ over finite fields $F$. As an application one obtains a factorization of a subgroup of finite index of $\U(\Z G)$ into a product of three nilpotent groups, and one explicitly constructs finitely many generators for each of these factors. Examples were given to show that the construction of the orthogonal idempotents can not be extended to, for example, arbitrary finite metacyclic groups. In this paper, we are able to describe a complete set of matrix units for a class of finite strongly monomial groups containing the finite metacyclic groups $C_{q^m}\rtimes C_{p^n}$ with $C_{p^n}$ acting faithfully on $C_{q^m}$ (Theorem~\ref{idempotents}). For the latter groups we obtain as an application of these results (and the earlier results on central units) again an explicit construction of finitely many generators of three nilpotent subgroups that together generate a subgroup of finite index in $\U(\Z G)$ (Theorem~\ref{units pq}).

\section{Preliminaries}

Throughout, $G$ will be a finite group. Let $g$ be an element of $G$ of order $n$ and $k$ and $m$ positive integers such that $k^m\equiv 1 \mod n$. Then
$$\Bass{k}{m}{g}=(1+g+\dots + g^{k-1})^{m}+\frac{1-k^m}{n}(1+g+\dots+g^{n-1})$$ is a unit of the integral group ring $\Z G$; called a Bass unit. The units of this form were introduced in \cite{bass1966} and satisfy the following equalities (\cite[Lemma 3.1]{2006GoncalvesPassman}):
\begin{eqnarray}
 \label{Basseq1} \Bass{k}{m}{g}&=&\Bass{k_1}{m}{g}, \mbox{ if } k\equiv k_1 \mod n, \\
 \label{Basseq2} \Bass{k}{m}{g}\Bass{k}{m_1}{g}&=&\Bass{k}{m+m_1}{g},\\
 \label{Basseq3} \Bass{k}{m}{g}\Bass{k_1}{m}{g^k}&=&\Bass{kk_1}{m}{g}\text{ and } \\
 \label{Basseq4} \Bass{1}{m}{g}&=&1
\end{eqnarray}
for $g\in G$, $n=|g|$ and $k^m\equiv k_1^m \equiv k^{m_1} \equiv 1 \mod n$.
Moreover,
\begin{equation}\label{Basseq5}
\Bass{n-1}{m}{g} = (-g)^{-m}.
\end{equation}
By (\ref{Basseq2}) we have, for $r$ a positive integer,
\begin{equation}\label{Basseq6}
\Bass{k}{m}{g}^{r} = \Bass{k}{mr}{g}
\end{equation}
and from
(\ref{Basseq1}), (\ref{Basseq3}) and (\ref{Basseq5}) we deduce
    \begin{equation}\label{Basseq7}
    \Bass{n-k}{m}{g} = \Bass{k(n-1)}{m}{g} = \Bass{k}{m}{g} \Bass{n-1}{m}{g^k} = \Bass{k}{m}{g}g^{-km}
    \end{equation}
provided $(-1)^m\equiv 1 \mod n$.

Let $N$ be a normal subgroup of $G$. Using equations (\ref{Basseq1}) and (\ref{Basseq6}) together with the Chinese Remainder Theorem, it is easy to verify that some power of a given Bass unit in $\Z(G/N)$ is the natural image of a Bass unit in $\Z G$.

If $R$ is a unital associative ring and $G$ is a group then $R*^{\alpha}_{\tau} G$ denotes a crossed product with action $\alpha:G\rightarrow \Aut(R)$ and twisting (a two-cocycle) $\tau:G\times G \rightarrow \U(R)$ (see for example \cite{Passman1989}), i.e. $R*^{\alpha}_{\tau} G$ is the associative ring $\bigoplus_{g\in G} R u_g$ with multiplication given by the following rules: $u_g a = \alpha_g(a) u_g$ and $u_g u_h = \tau(g,h) u_{gh}$, for $a\in R$ and $g,h\in G$. In case $G$ is cyclic, say generated by $g$ of order $n$, then the crossed product $R*^{\alpha}_{\tau} G$ is completely determined by $\sigma=\alpha_g$ and $a=u_g^n$. In this case, as in \cite{Reiner1975}, the crossed product is simply denoted $(R,\sigma,a)$. Recall that a classical crossed product is a crossed product $L*^{\alpha}_{\tau} G$, where $L/F$ is a finite Galois extension, $G = \Gal(L/F)$ is the Galois group of the field extension $L/F$ and $\alpha$ is the natural action of $G$ on $L$. A classical crossed product $L *^{\alpha}_{\tau} G$ is denoted by $(L/F,\tau)$ \cite{Reiner1975}. If the twisting $\tau$ is cohomologically trivial, then the classical crossed product is isomorphic to a matrix algebra over its center. Moreover, when $\tau=1$ we get an explicit isomorphism. More precisely, denoting the matrix associated to an endomorphism $f$ in a basis $B$ as $[f]_B$, we have

\begin{theorem}\label{reiner}\cite[Corollary 29.8]{Reiner1975}
Let $L/F$ be a finite Galois extension and $n=[L:F]$. The classical crossed product $(L/F,1)$ is isomorphic (as $F$-algebra) to $M_n(F)$. Moreover, an isomorphism is given by
$$\begin{array}{rcccc}
\psi:(L/F,1) & \rightarrow & \End_F(L) & \rightarrow  & M_n(F)   \\
xu_{\sigma} & \mapsto & x'\circ \sigma & \mapsto & [x'\circ \sigma]_B,
\end{array}$$
for $x\in L$, $\sigma\in \Gal(L/F)$, $B$ an $F$-basis of $L$ and where $x'$ denotes multiplication by $x$ on $L$.
\end{theorem}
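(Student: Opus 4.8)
The plan is to realize $\psi$ as the composite of two $F$-algebra maps and verify that each is an isomorphism. The second arrow $\End_F(L)\to M_n(F)$, $f\mapsto [f]_B$, is the standard $F$-algebra isomorphism attached to a choice of $F$-basis $B$ of $L$ (note $\dim_F L=[L:F]=n$), so all the content lies in the first arrow
$$\phi:(L/F,1)\to \End_F(L),\qquad \sum_{\sigma\in G} x_\sigma u_\sigma \;\mapsto\; \sum_{\sigma\in G} x_\sigma'\circ\sigma,$$
where $G=\Gal(L/F)$, the map $x'$ denotes multiplication by $x$ on $L$, and each $\sigma$ is regarded as an $F$-linear endomorphism of $L$. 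Since $F\subseteq L$ is central, $\phi$ is visibly $F$-linear, and it sends the identity $u_{1}$ to $\mathrm{id}_L$; thus it remains to check multiplicativity and injectivity, and then to conclude by a dimension count.

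First I would check that $\phi$ is multiplicative. Because $\phi$ is $F$-linear, it suffices to test it on elements $xu_\sigma$ and $yu_\tau$ with $x,y\in L$ and $\sigma,\tau\in G$. In the crossed product with trivial twisting one has $u_\sigma y=\sigma(y)u_\sigma$ and $u_\sigma u_\tau=u_{\sigma\tau}$, whence $(xu_\sigma)(yu_\tau)=x\sigma(y)\,u_{\sigma\tau}$ and therefore $\phi\bigl((xu_\sigma)(yu_\tau)\bigr)=(x\sigma(y))'\circ(\sigma\tau)$. On the other side, evaluating the composite $(x'\circ\sigma)\circ(y'\circ\tau)$ at any $z\in L$ gives $x\,\sigma\bigl(y\tau(z)\bigr)=x\,\sigma(y)\,(\sigma\tau)(z)$, where the last equality uses that $\sigma$ is a ring homomorphism; this is exactly $\bigl((x\sigma(y))'\circ(\sigma\tau)\bigr)(z)$. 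Hence the two endomorphisms agree and $\phi$ is an $F$-algebra homomorphism.

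The main point is injectivity, which is where the hypothesis that $L/F$ is Galois enters through Dedekind's lemma on the linear independence of characters. If $\phi\bigl(\sum_\sigma x_\sigma u_\sigma\bigr)=0$, then $\sum_{\sigma\in G} x_\sigma\,\sigma(z)=0$ for every $z\in L$; since the distinct automorphisms $\sigma\in G$ are linearly independent over $L$ as maps $L^\times\to L^\times$, all coefficients $x_\sigma$ vanish, so $\phi$ is injective. Finally, because $L/F$ is Galois we have $|G|=[L:F]=n$, so $\dim_F (L/F,1)=n\cdot n=n^2=\dim_F M_n(F)$, and an injective $F$-algebra homomorphism between $F$-algebras of the same finite dimension is an isomorphism. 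Composing $\phi$ with $f\mapsto[f]_B$ yields the claimed isomorphism $\psi$. The only genuinely non-formal ingredient is the independence of characters; everything else is a routine computation or a dimension argument, so I expect that to be the one step deserving care.
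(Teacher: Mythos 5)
Your proof is correct. The paper offers no proof of this statement --- it is quoted from \cite[Corollary 29.8]{Reiner1975} --- and your argument (multiplicativity from the trivial twisting, injectivity via Dedekind's independence of characters, surjectivity by the dimension count $n\cdot n = n^2 = \dim_F M_n(F)$) is exactly the standard proof of that classical result.
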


Our approach is making use of the description of the Wedderburn decomposition of the rational group algebra $\Q G$. We shortly recall the character-free method of Olivieri, del R\'io and Sim\'on \cite{Olivieri2004} to describe these simple components.

If $H$ is a subgroup of $G$ then $N_G(H)$ denotes the normalizer of $H$ in $G$. We use the exponential notation for conjugation: $a^b = b\inv a b$. For each $\alpha \in \Q G$, $C_G(\alpha)$ denotes the centralizer of $\alpha$ in $G$.

For a subgroup $H$ of $G$, let $\suma{H}=\frac{1}{|H|}\sum_{h\in H} h$. Clearly, $\suma{H}$ is an idempotent of $\Q G$ which is central if and only if $H$ is normal in $G$. If $K\lhd H\leq G$  and $K\neq H$ then let $$\varepsilon(H,K)=\prod (\suma{K}-\suma{M})=\suma{K}\prod (1-\suma{M}),$$ where $M$ runs through the set of all minimal normal subgroups of $H$ containing $K$ properly. We extend this notation by setting $\varepsilon(H,H)=\suma{H}$. Clearly $\varepsilon(H,K)$ is an idempotent of the group algebra $\Q G$. Let $e(G,H,K)$ be the sum of the distinct $G$-conjugates of $\varepsilon(H,K)$, that is, if $T$ is a right transversal of $C_G(\varepsilon(H,K))$ in $G$, then $$e(G,H,K)=\sum_{t\in T}\varepsilon(H,K)^t.$$ Clearly, $e(G,H,K)$ is a central element of $\Q G$ and if the $G$-conjugates of $\varepsilon(H,K)$ are orthogonal, then $e(G,H,K)$ is a central idempotent of $\Q G$.

A strong Shoda pair of $G$ is a pair $(H,K)$ of subgroups of $G$ with the properties that $K\leq H\unlhd N_G(K)$, $H/K$ is cyclic and a maximal abelian subgroup of $N_G(K)/K$ and the different $G$-conjugates of $\varepsilon(H,K)$ are orthogonal. In this case $C_G(\varepsilon(H,K))=N_G(K)$.

Let $\chi$ be an irreducible (complex) character of $G$. One says that $\chi$ is strongly monomial if there is a strong Shoda pair $(H,K)$ of $G$ and a linear character $\theta$ of $H$ with kernel $K$ such that $\chi=\theta^G$, the induced character of $G$. The group $G$ is strongly monomial if every irreducible character of $G$ is strongly monomial.

For finite strongly monomial groups, including finite abelian-by-supersolvable groups, all primitive central idempotents are of the form $e(G,H,K)$ with $(H,K)$ a strong Shoda pair of $G$. Note that different strong Shoda pairs can determine the same primitive central idempotent. Indeed, let $(H_1,K_1)$ and $(H_2,K_2)$ be two strong Shoda pairs of a finite group $G$. Then $e(G,H_1,K_1)=e(G,H_2,K_2)$ if and only if there is a $g\in G$ such that $H_1^g\cap K_2=K_1^g\cap H_2$ \cite{Olivieri2006}. In that case we say that $(H_1,K_1)$ and $(H_2,K_2)$ are equivalent as strong Shoda pairs of $G$. In particular, to calculate the primitive central idempotents of $G$ it is enough to consider only one strong Shoda pair in each equivalence class. We express this by saying that we take a complete and non-redundant set of strong Shoda pairs.

In \cite{Olivieri2004} more information was obtained on the strong Shoda pairs needed to describe the primitive central idempotents of the rational group algebra of a finite metabelian group. We recall the statement.

\begin{theorem}\label{SSPmetabelian}
Let $G$ be a metabelian finite group and let $A$ be a maximal abelian subgroup of $G$ containing the commutator subgroup $G'$. The primitive central idempotents of $\Q G$ are the elements of the form $e(G,H,K)$, where $(H,K)$ is a pair of subgroups of $G$ satisfying the following conditions:
\begin{enumerate}
\item \label{metabelian1}$H$ is a maximal element in the set $\{B\leq G \mid A\leq B \mbox{ and } B'\leq K\leq B\}$;
\item \label{metabelian2}$H/K$ is cyclic.
\end{enumerate}
\end{theorem}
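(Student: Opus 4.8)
The plan is to deduce the statement entirely from the theory of strong Shoda pairs recalled above. Since $G' \le A$ and $A$ is abelian, $A$ is normal in $G$ and $G/A$ is abelian, so $G$ is abelian-by-abelian and in particular abelian-by-supersolvable, hence strongly monomial. Therefore every primitive central idempotent of $\Q G$ equals $e(G,H,K)$ for some strong Shoda pair $(H,K)$, and the theorem reduces to two inclusions: (i) every pair $(H,K)$ satisfying conditions~\ref{metabelian1} and~\ref{metabelian2} is a strong Shoda pair; and (ii) the idempotent attached to any strong Shoda pair can be realized by a pair satisfying these two conditions.

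For inclusion (i) I would argue straight from the definition. Condition~\ref{metabelian2} gives that $H/K$ is cyclic, hence abelian, so $H' \le K$, $K \unlhd H$, and $H \le N_G(K)$. Because $(N_G(K))' \le G' \le A \le H$, we have $(N_G(K))' \le H$ and hence $H \unlhd N_G(K)$. For the maximal-abelian property, I would take any abelian $M/K$ with $H/K \le M/K \le N_G(K)/K$; then $A \le H \le M$ and $M' \le K \le M$, so $M$ belongs to the set in condition~\ref{metabelian1}, and maximality forces $M = H$. This establishes $K \le H \unlhd N_G(K)$ with $H/K$ cyclic and maximal abelian in $N_G(K)/K$; it then remains to check that the $G$-conjugates of $\varepsilon(H,K)$ are orthogonal, equivalently that $C_G(\varepsilon(H,K)) = N_G(K)$. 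I would obtain this from the fact that a maximal abelian subgroup is self-centralizing, so $H/K$ is self-centralizing in $N_G(K)/K$, and propagate the centralizer computation to all of $G$ using that conjugation is controlled by $G' \le A \le H$.

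For inclusion (ii) the key observation is that the real work is to realize the idempotent by a strong Shoda pair $(H,K)$ with $A \le H$: once this is done, $(H,K)$ automatically satisfies both conditions. Indeed, if $A \le H$ and $B \supseteq H$ satisfies $A \le B$ and $B' \le K \le B$, then $K \unlhd B$, so $B \le N_G(K)$, and $B/K$ is abelian containing the maximal abelian subgroup $H/K$ of $N_G(K)/K$, whence $B = H$; thus $H$ is maximal as in condition~\ref{metabelian1}, while condition~\ref{metabelian2} is exactly the cyclicity built into being a strong Shoda pair.

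It therefore remains to realize the idempotent of an arbitrary strong Shoda pair with $A \le H$, and this is where the main obstacle lies. The plan is to use Clifford theory relative to the normal abelian subgroup $A$: writing $\chi = \theta^G$ for the character attached to the given pair, I would restrict $\chi$ to $A$, choose a linear constituent $\lambda$, and pass to its inertia group $T = I_G(\lambda)$, which contains $A$ and satisfies $T' \le G' \le A$, so that $T/A$ is abelian. The crucial and hardest step is to show that $\lambda$ extends to a linear character $\mu$ of $T$; granting this, every irreducible constituent of $\chi$ lying over $\lambda$ has the form $(\mu\beta)^G$ with $\beta$ a linear character of $T/A$, so $\chi$ is induced from the linear character $\mu\beta$ of $T \supseteq A$. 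Taking $H = T$ and $K = \ker(\mu\beta)$ then yields a strong Shoda pair with $A \le H$ realizing the same character $\chi$, hence the same primitive central idempotent, which by the previous paragraph satisfies conditions~\ref{metabelian1} and~\ref{metabelian2}. I expect the vanishing of the cohomological obstruction to the extension of $\lambda$, where the metabelian hypothesis and the maximality of $A$ are essential, to be the technical heart of the argument, alongside the orthogonality verification flagged in inclusion (i).
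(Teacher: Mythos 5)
This theorem is stated in the paper without proof --- it is quoted from Olivieri--del R\'io--Sim\'on \cite{Olivieri2004} --- so there is no internal argument to compare against, and I assess your proposal on its own terms. Your inclusion (i) and your reduction of (ii) to producing a strong Shoda pair $(H,K)$ with $A\le H$ are essentially sound; even the orthogonality verification you leave open is recoverable, because $G'\le A\le H$ forces $H\lhd G$, so $\varepsilon(H,K)^g=\varepsilon(H,K^g)$ and distinct conjugates are supported on disjoint sets of linear characters of $H$. The problem lies in the step you yourself flag as the technical heart: the claim that the invariant linear character $\lambda$ of $A$ extends to a linear character of its inertia group $T=I_G(\lambda)$. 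Since $T'\le G'\le A$, such an extension exists if and only if $T'\le\ker\lambda$, and this fails in general: the cohomological obstruction does \emph{not} vanish, even with $G$ metabelian and $A$ maximal abelian.

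Here is a counterexample. Let $p$ be an odd prime and let $G$ be the class-$2$, exponent-$p$ group of order $p^6$ generated by $x,y,a_1,a_2$ subject to $[x,y]=c$, $[a_1,x]=[a_2,y]=d$, $[a_1,y]=[a_2,x]=[a_1,a_2]=1$, with $c,d$ central. Then $G'=\GEN{c,d}\cong C_p\times C_p$, and $A=\GEN{a_1,a_2,c,d}\cong C_p^4$ contains $G'$ and is self-centralizing (an element $x^iy^jw$ with $w\in A$ centralizes $a_1$ and $a_2$ only if $d^i=d^j=1$), hence maximal abelian. Take $\lambda\in\Irr(A)$ with $\lambda(c)=\zeta_p$ and $\lambda(a_1)=\lambda(a_2)=\lambda(d)=1$. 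Since $[A,G]=\GEN{d}\le\ker\lambda$, the character $\lambda$ is $G$-invariant, so $T=G$; but $G'\not\le\ker\lambda$, so $\lambda$ has no linear extension to $T$, and Gallagher's correspondence on $T/A$, on which your parametrization $(\mu\beta)^G$ rests, is unavailable. The unique $\chi\in\Irr(G)$ over $\lambda$ has degree $p$ and is induced from a linear extension $\theta$ of $\lambda$ to the \emph{proper} subgroup $H=\GEN{A,x}$ of $T$ (note $H'=\GEN{d}\le\ker\lambda$, and $H$ is exactly a maximal member of $\{B\mid A\le B,\ B'\le\ker\theta\le B\}$, as the theorem predicts, since $c\notin\ker\theta$ excludes $B=G$). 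The correct route is therefore not to extend $\lambda$ to all of $T$, but to extend it to a subgroup $H$ maximal subject to $A\le H$ and $H'\le K\le H$ for the relevant kernel $K$, and then to use that maximality to show $\theta^G$ is irreducible (a Shoda-pair criterion); this intermediate maximal choice, which is precisely what condition~\ref{metabelian1} encodes, is the missing idea in your argument.
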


For finite metacyclic groups, a more precise description of the primitive central idempotents of $\Q G$ in terms of the numerical information describing the group is given in \cite{Olivieri2006}.

The structure of the simple component $\Q Ge(G,H,K)$, with $(H,K)$ a strong Shoda pair of $G$, is given in the following proposition.

\begin{proposition}\label{SSP}\cite[Proposition 3.4]{Olivieri2004}
Let $(H,K)$ be a strong Shoda pair of a finite group $G$ and let $k=[H:K]$, $N=N_G(K)$, $n=[G:N]$, $yK$ a generator of $H/K$ and $\phi:N/H\rightarrow N/K$ a left
inverse of the canonical projection $N/K\rightarrow N/H$. Then $\Q Ge(G,H,K)$ is isomorphic to $M_n(\Q(\zeta_k)*_{\tau}^{\alpha} N/H)$ and the
action and twisting are given by
\begin{eqnarray*}
\alpha_{nH}(\zeta_k) &=& \zeta_k^i, \mbox{ if } yK^{\phi(nH)}=y^iK \mbox{ and}\\
\tau(nH,n'H) &=& \zeta_k^j, \mbox{ if }  \phi(nn'H)\inv\phi(nH)\phi(n'H)=y^jK,
\end{eqnarray*}
for $nH,n'H\in N/H$, integers $i$ and $j$ and $\zeta_k$ a complex primitive $k$-th root of unity.
\end{proposition}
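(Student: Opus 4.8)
The plan is to prove the statement in two stages: first reduce the computation of $\Q Ge(G,H,K)$ from $G$ to the normalizer $N$, obtaining a matrix algebra over $\Q N\varepsilon(H,K)$, and then identify this smaller algebra as the asserted crossed product. Write $\varepsilon=\varepsilon(H,K)$ and $e=e(G,H,K)$, and recall the two facts supplied by the definition of a strong Shoda pair: $C_G(\varepsilon)=N$, and the $n=[G:N]$ distinct $G$-conjugates $\varepsilon^{t}$, with $t$ ranging over a right transversal $T=\{t_1=1,\dots,t_n\}$ of $N$ in $G$, are orthogonal idempotents with sum $e$.

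For the first stage I would exhibit explicit matrix units. Put $a_{ij}=t_i^{-1}\varepsilon t_j$ for $1\le i,j\le n$. The crucial observation is that $\varepsilon g\varepsilon=0$ whenever $g\notin N$: since $\varepsilon^{g}$ depends only on the coset $Ng$ (using $\varepsilon^{n}=\varepsilon$ for $n\in N$), for $g\notin N$ the idempotent $\varepsilon^{g}$ is one of the conjugates orthogonal to $\varepsilon$, and $\varepsilon g\varepsilon=g\,\varepsilon^{g}\varepsilon=0$. Consequently $\varepsilon\,t_jt_k^{-1}\varepsilon$ vanishes unless $Nt_j=Nt_k$, i.e. unless $j=k$, in which case it equals $\varepsilon$; this gives $a_{ij}a_{kl}=\delta_{jk}a_{il}$, together with $a_{ii}=\varepsilon^{t_i}$ and $\sum_i a_{ii}=e$. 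The same vanishing shows $a_{11}\,\Q Ge\,a_{11}=\varepsilon\,\Q G\,\varepsilon=\Q N\varepsilon$, every group element outside $N$ contributing $0$. As the $a_{ij}$ form a full set of $n\times n$ matrix units in $\Q Ge$, the standard consequence is $\Q Ge\cong M_n(\Q N\varepsilon)$.

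The second stage is to show $\Q N\varepsilon\cong\Q(\zeta_k)*^{\alpha}_{\tau}N/H$. Since $K\lhd N$, projecting onto the component $\suma{K}\,\Q N\cong\Q[N/K]$ identifies $\Q N\varepsilon$ with $\Q[N/K]\,\varepsilon(H/K,1)$, so I may assume $K=1$; then $H$ is cyclic of order $k$, normal in $N$, with generator $y$, and $\varepsilon=\varepsilon(H,1)$ is the primitive central idempotent of $\Q H$ supported on the faithful characters of $H$, whence $\Q H\varepsilon\cong\Q(\zeta_k)$ via $y\varepsilon\mapsto\zeta_k$. Because $\varepsilon$ is central in $\Q N$ and $H\lhd N$, the elements $n\varepsilon$, for $n$ running over a transversal of $H$ in $N$, span $\Q N\varepsilon$ over $\Q H\varepsilon$. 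They are in fact a basis, by a disjoint-support argument: for $\alpha\in\Q H$ the element $\alpha n\varepsilon$ has support inside $HnH=nH$ (normality of $H$), so distinct cosets give disjoint supports, and a vanishing $\Q(\zeta_k)$-combination forces each $\alpha n\varepsilon=0$; right multiplication by $n^{-1}$ and $\varepsilon^{n}=\varepsilon$ then transfer this to $\alpha\varepsilon=0$. This exhibits $\Q N\varepsilon$ as a crossed product of $\Q(\zeta_k)\cong\Q H\varepsilon$ by $N/H$, with $\Q(\zeta_k)$-basis the units $u_{nH}=\phi(nH)\varepsilon$, each supported on the coset $nH$.

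It remains to read off the action and twisting, and this I expect to be the main obstacle, not for lack of a formula but because of the convention bookkeeping. Conjugating $y\varepsilon$ by $u_{nH}$ returns $\phi(nH)\,y\,\phi(nH)^{-1}\varepsilon\in\Q H\varepsilon$, necessarily a power $y^{i}\varepsilon$ with $i$ recording how $\phi(nH)$ conjugates $yK$, giving $\alpha_{nH}(\zeta_k)=\zeta_k^{i}$; and since $u_{nH}u_{n'H}=\phi(nH)\phi(n'H)\varepsilon$ and $u_{nn'H}=\phi(nn'H)\varepsilon$ lie in the same graded piece $nn'H$, one gets $u_{nH}u_{n'H}=\tau(nH,n'H)\,u_{nn'H}$ with $\tau$ the image in $\Q(\zeta_k)$ of an element of $H$ built from the $\phi(nH)$, i.e. $\zeta_k^{j}$. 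The delicate point is that the naive computation with $u_{nH}=\phi(nH)\varepsilon$ and $\alpha_g(a)=u_gau_g^{-1}$ tends to produce the inverse automorphism and a transposed (cohomologous) cocycle relative to the displayed formulas $yK^{\phi(nH)}=y^iK$ and $\phi(nn'H)^{-1}\phi(nH)\phi(n'H)=y^jK$; one must therefore fix, once and for all, the direction of the exponential conjugation, the side on which the units act, and the cocycle convention, and check that $\alpha$ and $\tau$ are independent of the transversal once $\phi$ is chosen, so that the exponents $i$ and $j$ agree exactly with those in the statement.
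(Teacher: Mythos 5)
The paper does not actually prove this proposition: it is quoted verbatim from \cite[Proposition 3.4]{Olivieri2004}, so there is no in-house argument to compare yours against. Judged on its own terms, your reconstruction follows the standard proof from that reference and both stages are sound. In the first stage, $C_G(\varepsilon)=N$ gives $\varepsilon^{g}=\varepsilon^{t}$ for $g\in Nt$, so for $g\notin N$ the conjugate $\varepsilon^{g}$ is one of the idempotents orthogonal to $\varepsilon$ and $\varepsilon g\varepsilon=g\varepsilon^{g}\varepsilon=0$; this does yield the matrix units $a_{ij}=t_i\inv\varepsilon t_j$, the corner ring $\varepsilon\,\Q G\,\varepsilon=\Q N\varepsilon$, and hence $\Q Ge\cong M_n(\Q N\varepsilon)$. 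In the second stage, the reduction modulo $K$, the identification $\Q H\varepsilon\cong\Q(\zeta_k)$ via $y\varepsilon\mapsto\zeta_k$ (because $\varepsilon(H,K)$ is supported exactly on the faithful characters of $H/K$), and the disjoint-support argument showing that the elements $\phi(nH)\varepsilon$ form a free $\Q H\varepsilon$-basis of $\Q N\varepsilon$ are all correct; normality of $H$ in $N$ and centrality of $\varepsilon$ in $\Q N$ are used exactly where they are needed.

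The one loose end is the one you flag yourself, and your suspicion is well founded: with the paper's stated convention $u_g a=\alpha_g(a)u_g$, i.e.\ $\alpha_g(a)=u_gau_g\inv$, and the natural choice $u_{nH}=\phi(nH)\varepsilon$, the computation produces $\alpha_{nH}(\zeta_k)=\zeta_k^{i^{-1}}$ (since $y^{\phi(nH)}=y^iK$ means $\phi(nH)y\phi(nH)\inv\equiv y^{i^{-1}} \bmod K$) and a cocycle that is a Galois twist of $\zeta_k^{j}$, not the displayed formulas on the nose. So the proof is not finished until this is pinned down, but the fix is pure bookkeeping: either read the formulas with the right-hand convention $au_g=u_g\alpha_g(a)$ and $u_gu_h=u_{gh}\tau(g,h)$, under which $u_{nH}=\phi(nH)\varepsilon$ gives exactly the stated $i$ and $j$, or keep the paper's convention and use $u_{nH}=\phi(nH)\inv\varepsilon$ with the corresponding reindexing. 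Both choices give isomorphic crossed products, and everything the present paper extracts from the proposition --- the matrix degree, the center $\Q(\zeta_k)^{N/H}$, and whether the twisting is trivial --- is invariant under this change, so the discrepancy is harmless; for a self-contained proof you should simply commit to one convention and carry the exponents through.
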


Let $n$ be a positive integer, $C_n=\GEN{g}$, a cyclic group of order $n$ and $\zeta_n$ a complex primitive $n$-th root of unity. Then there are isomorphisms
    $$\begin{array}{ccccc} \Gal(\Q(\zeta_n)/\Q) & \rightarrow & \U(\Z/n\Z) & \rightarrow & \Aut(C_n) \\
        (\zeta_n\mapsto \zeta_n^r) & \mapsto & r & \mapsto & (\phi_r:g\rightarrow g^r)
    \end{array} $$
Throughout the paper we will abuse the notation and consider these isomorphisms as equalities so that, e.g. a subgroup $H$ of a cyclic group of order $\varphi(n)$ will be identified with a subgroup of $\U(\Z/n\Z)$ and with $\Gal(\Q(\zeta_n)/\Q(\zeta_n)^H)$. In particular, with the notation of Proposition~\ref{SSP}, the action $\alpha$ of the crossed product $\Q(\zeta_k)*_{\tau}^{\alpha} N/H$ in Proposition~\ref{SSP} is faithful. Therefore the crossed product $\Q(\zeta_k)*_{\tau}^{\alpha} N/H$ can be described as a classical crossed product $(\Q(\zeta_k)/F,\tau)$, where $F$ is the center of the
algebra and $\alpha$ is determined by the action of $N/H$ on $H/K$. In this way the Galois group $\Gal(\Q(\zeta_k)/F)$ can be identified with $N/H$ and with this identification $F=\Q(\zeta_k)^{N/H}$.

Theorem~\ref{SSPmetabelian} and Proposition~\ref{SSP} allow one to easily compute the primitive central idempotents and the Wedderburn components of the rational group algebra of a finite metacyclic group. Recall this is a group $S$ having a normal cyclic subgroup $N=\GEN{a}$ such that $S/N=\GEN{bN}$ is cyclic. Every finite metacyclic group $S$ has a presentation of the form $$S=\GEN{a,b\mid a^m=1,\ b^n=a^t,\ a^b=a^r},$$ where $m,n,t,r$ are integers satisfying the conditions $r^n \equiv 1 \mod m$ and $m\mid t(r-1)$. Define $\sigma\in\Aut(\GEN{a})$ as $\sigma(a)=a^b=a^r$. Let $u$ be the order of $\sigma$. Then, $u\mid n$.

We finish this section with recalling some well-known results on orders. A subring $\O$ of a finite dimensional semisimple $\Q$-algebra $A$ is called an order if it is a finitely generated $\Z$-module such
that $\Q\O=A$. For example if $G$ is a finite group then $\Z G$ is an order in $\Q G$ and $\mathcal{Z}(\Z G)$ is an order in $\mathcal{Z}(\Q G)$. The intersection of two orders in $A$ is again an order in $A$ and if $\O_1\subseteq \O_2$ are orders in $A$ then the index of their unit groups $[\U(\O_2):\U(\O_1)]$ is finite (see \cite[Lemma 4.2, Lemma 4.6]{Sehgal1993}). Moreover, the unit group of an order in $A$ is finitely generated \cite{1962BorelHarishChandra}. Finally recall that in a finitely generated abelian group replacing generators by powers of themselves yields a subgroup of finite index. We will use these properties several times in the proofs without explicit reference.

\section{The group of central units of $\Z G$ for a finite strongly monomial group $G$}\label{virtual}

In this section we will focus on strongly monomial groups $G$ such that there is a complete and non-redundant set of strong Shoda pairs $(H,K)$ of $G$ with the property that $[H:K]$ is a prime power. For this class of strongly monomial groups, we construct a virtual basis for the group $\mathcal{Z}(\U(\Z G))$. A virtual basis of an abelian group $A$ is a set of multiplicatively independent elements of $A$ which generate a subgroup of finite index in $A$. It will be clear that the construction of units in the basis is inspired by the construction of Bass units.

As it will be shown in Section~\ref{SectionMetacyclic}, the conditions in the statement of Theorem~\ref{basis} on the strong Shoda pairs of the group $G$ are fulfilled when $G$ is a metacyclic group of the type $C_{q^m}\rtimes C_{p^n}$ with $C_{p^n}$ acting faithfully on $C_{q^m}$. However, the class of strongly monomial groups such that there is a complete and non-redundant set of strong Shoda pairs $(H,K)$ of $G$ with the property that $[H:K]$ is a prime power, is a wider class. For example the alternating group $A_4$ of degree 4 satisfies the condition and it is not metacyclic (and not nilpotent). Although, not all strongly monomial groups have only strong Shoda pairs with prime power index. It can be shown that all strong Shoda pairs of the dihedral group $D_{2n}=\GEN{a,b\mid a^n=b^2=1,\ a^b=a^{-1}}$ (respectively, the quaternion group $Q_{4n}=\GEN{x,y \mid x^{2n} = y^4 = 1,\ x^n = y^2,\ x^y = x^{-1}}$) have prime power index if and only if $n$ is a power of a prime (respectively, $n$ is a power of 2).

We begin by determining the number of elements of a virtual basis, i.e. the rank, of $\mathcal{Z}(\U(\Z G))$. Let $G$ be a finite group and let $g,h\in G$. Recall that $g$ and $h$ are said to be $\R$-conjugate (respectively, $\Q$-conjugate) if $g$ is conjugate to either $h$ or $h\inv$ (respectively, to $h^r$ for some integer $r$ coprime with the order of $h$). This defines two equivalence relations on $G$ and their equivalence classes are called real and rational conjugacy classes of $G$, respectively. Using a Theorem of Berman and Witt and Dirichlet's Unit Theorem one can prove that the rank of $\mathcal{Z}(\U(\Z G))$ for $G$ a finite group is the difference between the number of real conjugacy classes and rational conjugacy classes of $G$. Furthermore the number of real (respectively, rational) conjugacy classes of $G$ coincides with the number of simple components of the Wedderburn decomposition of $\R G$ (respectively, $\Q G$) (see \cite[Theorem~42.8]{1962CurtisReiner}, \cite{2005RitterSehgal} and \cite{Ferraz2004}).

Let $G$ be a finite strongly monomial group. Then, by Proposition~\ref{SSP}, one obtains the following description of the Wedderburn decomposition of $\Q G$: $$\bigoplus_{(H,K)}M_{[G:N]}(\Q(\zeta_{[H:K]})* N/H),$$ with $(H,K)$ running through a complete and non-redundant set of strong Shoda pairs of $G$ and $N=N_G(K)$ (we use notations as in Proposition~\ref{SSP}).

Using the properties of the group of units of orders one deduces that the rank of $\mathcal{Z}(\U(\Z G))$ is the sum of the ranks of the groups of central units in orders of the simple components, and these are the ranks of the groups of units of the fixed rings $\Z[\zeta_{[H:K]}]^{N/H}$.

Consider the center $F=\Q(\zeta_{[H:K]})^{N/H}$ of the simple component $M_{[G:N]}(\Q(\zeta_{[H:K]})* N/H)$. Clearly, $$[F:\Q]=\frac{[\Q(\zeta_{[H:K]}):\Q]}{[\Q(\zeta_{[H:K]}):F]}=\frac{\varphi([H:K])}{[N:H]}.$$ Since $F$ is a Galois extension of $\Q$, we know that $F$ is either totally real or totally complex. If $F$ is totally real, then $F$ is contained in the maximal real subfield $\Q(\zeta_{[H:K]}+\zeta_{[H:K]}\inv)$ of $\Q(\zeta_{[H:K]})$. This happens if and only if the Galois group $N/H$ contains complex conjugation, which means that $hh^n \in K$ for some $n\in N$ and $h$ such that $H=\GEN{h,K}$. Now using the Dirichlet Unit Theorem, we obtain at once an appropriate rank computation.

\begin{theorem}\label{rank}
 Let $G$ be a finite strongly monomial group. Then the rank of $\mathcal{Z}(\U(\Z G))$ equals  $$\sum_{(H,K)} \left(\frac{\varphi([H:K])}{k_{(H,K)}[N:H]}-1\right),$$ where $(H,K)$ runs through a complete and non-redundant set of strong Shoda pairs of $G$, $h$ is such that $H=\GEN{h,K}$ and $$k_{(H,K)}=\left\{\begin{array}{ll} 1, & \mbox{if } hh^n\in K \mbox{ for some } n\in N_G(K); \\ 2, & \mbox{otherwise}.\end{array} \right.$$
\end{theorem}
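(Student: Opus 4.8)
The plan is to express the rank of $\mathcal{Z}(\U(\Z G))$ as a sum of Dirichlet ranks attached to the centers of the Wedderburn components of $\Q G$, and then to evaluate each of these ranks explicitly. First I would record that $\mathcal{Z}(\U(\Z G))=\U(\mathcal{Z}(\Z G))$ and that $\mathcal{Z}(\Z G)$ is an order in $\mathcal{Z}(\Q G)$, which, by Proposition~\ref{SSP} and the identification of $N/H$ with a subgroup of $\Gal(\Q(\zeta_{[H:K]})/\Q)$, decomposes as the product of the centers $F=\Q(\zeta_{[H:K]})^{N/H}$ of the simple components indexed by a complete and non-redundant set of strong Shoda pairs $(H,K)$. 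Since the unit group of an order in a finite product of number fields has rank equal to the sum of the ranks of the unit groups of orders in the factors (using the properties of orders recalled above), the rank of $\mathcal{Z}(\U(\Z G))$ equals $\sum_{(H,K)}\textnormal{rank}\,\U(\Z[\zeta_{[H:K]}]^{N/H})$, where $\Z[\zeta_{[H:K]}]^{N/H}$ is an order in $F$.

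Next I would evaluate a single summand with the Dirichlet Unit Theorem. Writing $d=[F:\Q]=\varphi([H:K])/[N:H]$ (the degree count already recorded), and using that $F/\Q$ is Galois and hence $F$ is either totally real or totally complex, the unit rank of an order in $F$ equals $d-1$ in the totally real case (no complex places) and $\tfrac{d}{2}-1$ in the totally complex case (no real places). Both cases are captured at once by $\tfrac{d}{k_{(H,K)}}-1$, with $k_{(H,K)}=1$ when $F$ is totally real and $k_{(H,K)}=2$ when $F$ is totally complex; substituting $d=\varphi([H:K])/[N:H]$ reproduces the claimed term.

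The step I regard as the heart of the argument, and the main obstacle, is matching this analytic dichotomy with the group-theoretic condition defining $k_{(H,K)}$. Under the Galois correspondence for $\Q(\zeta_{[H:K]})/\Q$, the field $F=\Q(\zeta_{[H:K]})^{N/H}$ is totally real exactly when it is fixed pointwise by complex conjugation, equivalently when complex conjugation lies in $\Gal(\Q(\zeta_{[H:K]})/F)=N/H$. Translating through the identification of $N/H$ with its conjugation action on $H/K=\GEN{hK}$, complex conjugation is the inversion automorphism $hK\mapsto h^{-1}K$, so it belongs to $N/H$ precisely when some $n\in N_G(K)$ satisfies $h^nK=h^{-1}K$; as $H/K$ is abelian this is exactly $hh^n\in K$, i.e. $k_{(H,K)}=1$. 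Getting this translation, together with the factor of two in the totally complex case, exactly right is the delicate point; once it is in place, summing $\frac{\varphi([H:K])}{k_{(H,K)}[N:H]}-1$ over the chosen strong Shoda pairs yields the formula, the remaining reductions being routine consequences of Proposition~\ref{SSP} and the properties of orders.
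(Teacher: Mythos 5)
Your proposal is correct and follows essentially the same route as the paper: it decomposes $\mathcal{Z}(\Q G)$ into the centers $F=\Q(\zeta_{[H:K]})^{N/H}$ via Proposition~\ref{SSP}, reduces the rank of $\mathcal{Z}(\U(\Z G))$ to the sum of the unit ranks of orders in these fields, and applies the Dirichlet Unit Theorem together with the observation that $F$ is totally real precisely when complex conjugation (inversion on $H/K$) lies in $N/H$, i.e.\ $hh^n\in K$ for some $n\in N_G(K)$. This is exactly the argument the paper gives in the paragraphs preceding the theorem.
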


In order to describe a virtual basis for $\mathcal{Z}(\U(\Z G))$ we need to construct some units.

Let $R$ be an associative ring with identity. Let $x\in R$ be a torsion unit of order $n$. Let $C_n=\GEN{g}$, a cyclic group of order $n$. Then the map $g\mapsto x$ induces a ring homomorphism $\Z\GEN{g}\rightarrow R$. If $k$ and $m$ are positive integers with $k^m\equiv 1\mod n$, then the element $$\Bass{k}{m}{x}=(1+x+\dots + x^{k-1})^{m}+\frac{1-k^m}{n}(1+x+\dots+x^{n-1})$$ is a unit in $R$ since it is the image of a Bass unit in $\Z\GEN{g}$. In particular, if $G$ is a finite group, $M$ a normal subgroup of $G$, $g\in G$ and $k$ and $m$ positive integers such that $\gcd(k,|g|)=1$ and
$k^m \equiv 1 \mod |g|$. Then we have $$\Bass{k}{m}{1-\suma{M}+g\suma{M}} = 1-\suma{M}+\Bass{k}{m}{g}\suma{M}.$$ Observe that any element $b=\Bass{k}{m}{1-\suma{M}+g\suma{M}}$ is an invertible element of $\Z G(1-\suma{M})+\Z G \suma{M}$. As this is an order in $\Q G$, there is a positive integer $n$ such that $b^n\in \U(\Z G)$. Let $n_{G,M}$ denote the minimal positive integer satisfying this condition for all $g\in G$. Then we call the element $$\Bass{k}{m}{1-\suma{M}+g\suma{M}}^{n_{G,M}}=\Bass{k}{mn_{G,M}}{1-\suma{M}+g\suma{M}}$$ a generalized Bass unit based on $g$ and $M$ with parameters $k$ and $m$. Note that we obtain the classical Bass units of $\Z G$ when $M$ is the trivial group.

Bass units of $\Z\GEN{g}$ project to powers of cyclotomic units in the simple components of $\Q\GEN{g}$. Hence we need to state some facts on these units. Let $\zeta_n$ denote a complex root of unity of order $n$. If $n>1$ and $k$ is an integer coprime with $n$ then $$\eta_{k}(\zeta_n)=\frac{1-\zeta_n^k}{1-\zeta_n}=1+\zeta_n+\zeta_n^2+\dots+\zeta_n^{k-1}$$ is a unit of $\Z[\zeta_n]$. We extend this notation by setting $$\eta_k(1)=1.$$ The units of the form $\eta_k(\zeta_n^j)$, with $j,k$ and $n$ integers such that $\gcd(k,n)=1$, are called the cyclotomic units of $\Q(\zeta_n)$.

It is easy to verify that the cyclotomic units satisfy the following equalities:
\begin{eqnarray}
 \label{Cycleq1} \eta_k(\zeta_n)&=&\eta_{k_1}(\zeta_n), \mbox{ if } k\equiv k_1 \mod n, \\
 \label{Cycleq2} \eta_{kk_1}(\zeta_n) &=& \eta_k(\zeta_n)\eta_{k_1}(\zeta_n^k),\\
 \label{Cycleq3} \eta_1(\zeta_n)&=&1, \\
 \label{Cycleq5} \eta_{n-k}(\zeta_n) &=& -\zeta_n^{-k}\eta_k(\zeta_n),
\end{eqnarray}
for $n>1$ and both $k$ and $k_1$ coprime with $n$.

In order to compute a virtual basis of $\mathcal{Z}(\U(\Z G))$, we will ``cover'' the central integral units in the different simple components by using generalized Bass units. This will lead to a final description of the central units up to finite index. Indeed, take an arbitrary central unit $u$ in $\mathcal{Z}(\U(\Z G))$. Then we can write this element as follows $$u=\sum_{(H,K)}u e(G,H,K)=\prod_{(H,K)} (1-e(G,H,K)+u e(G,H,K)),$$ where $(H,K)$ runs through a complete and non-redundant set of strong Shoda pairs of $G$. Hence it is necessary and sufficient to construct a set of multiplicatively independent units in the center of each order $\Z Ge(G,H,K) + \Z(1-e(G,H,K))$.

The center of $\Z Ge(G,H,K) + \Z(1-e(G,H,K))$ and $\Z[\zeta_{[H:K]}]^{N_{G}(K)/H} + \Z(1-e(G,H,K))$ are both orders in the center of $\Q Ge(G,H,K) + \Q(1-e(G,H,K))$ and therefore their unit groups are
commensurable, i.e. their intersection has finite index in both of them. Hence, we are interested in the units of $\Z[\zeta_{[H:K]}]^{N_{G}(K)/H}$ and furthermore in the units of $\Z G$ projecting to units in $\Z[\zeta_{[H:K]}]^{N_{G}(K)/H}$ and trivially in the other components.

It is known that the set $\{\eta_k(\zeta_{p^n})\mid 1<k< \frac{p^n}{2},\ p\nmid k\}$ generates a free abelian subgroup of finite index in $\U(\Z[\zeta_{p^n}])$ when $p$ is prime \cite[Theorem 8.2]{1982Washington}. For a subgroup $A$ of $\Aut(\GEN{\zeta_{p^n}})\simeq\Gal(\Q(\zeta_{p^n})/\Q)$ and $u\in \Q(\zeta_{p^n})$, we define $\pi_A(u)$ to be $\prod_{\sigma\in A } \sigma(u)$.
Since, by assumption, $[H:K]$ equals a prime power, say $p^n$, it is well-known that $\Aut(H/K)$ is cyclic, unless $p=2$ and $n\ge 3$ in which case $\Aut(H/K)=\GEN{\phi_5}\times\GEN{\phi_{-1}}$. Since the Galois group $N_G(K)/H$ of $\Q(\zeta_{p^n})/\Q(\zeta_{p^n})^{N_G(K)/H}$ is a subgroup of $\Aut(H/K)$, it follows that $N_G(K)/H$ is either $\GEN{\phi_r}$ for some $r$ or $\GEN{\phi_r}\times\GEN{\phi_{-1}}$ for some $r\equiv 1 \mod 4$. We simply denote $\pi_{N_G(K)/H}$ by $\pi$ and have
\begin{eqnarray*}
\pi(u) = N_{\Q(\zeta_{p^n})/\Q(\zeta_{p^n})^{N_G(K)/H}}(u)
= \prod_{\sigma\in N_G(K)/H } \sigma(u)
= \prod_{i\in N_G(K)/H} u^i
\end{eqnarray*}
for $u\in \Q(\zeta_{p^n})$. Observe that we are making use of the abuse of notation to identify $N_G(K)/H$ with $\Gal\left(\Q(\zeta_{p^n})/\Q(\zeta_{p^n})^{N_G(K)/H}\right)$ and with a subgroup of $\U(\Z/[H:K]\Z)$. We will also need the following Lemma.

\begin{lemma}\label{norm}
Let $A$ be a subgroup of $\Aut(\GEN{\zeta_{p^n}})$. Let $I$ be a set of coset representatives of $\U(\Z/{p^n\Z})$ modulo $\GEN{A,\phi_{-1}}$ containing 1. Then the set $$\{\pi_{A}\left(\eta_k(\zeta_{p^n})\right)\mid k\in I\setminus\{1\}\}$$ is a virtual basis of $\U\left(\Z[\zeta_{p^n}]^{A}\right)$.
\end{lemma}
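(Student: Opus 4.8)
The plan is to identify $\pi_{A}$ with the relative norm and then transport the known virtual basis of $\U(\Z[\zeta_{p^{n}}])$ down to the fixed field. Write $F=\Q(\zeta_{p^{n}})^{A}$ and $\O_{F}=\Z[\zeta_{p^{n}}]^{A}$, the ring of integers of $F$. Under the identification of $A$ with $\Gal(\Q(\zeta_{p^{n}})/F)$, the map $\pi_{A}$ is exactly the norm $N_{\Q(\zeta_{p^{n}})/F}$, so each $\pi_{A}(\eta_{k}(\zeta_{p^{n}}))$ is a unit of $\O_{F}$. First I would pin down the rank of $\U(\O_{F})$ by Dirichlet's Unit Theorem, distinguishing the two cases according to whether complex conjugation $\phi_{-1}$ lies in $A$. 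If $\phi_{-1}\in A$ then $F$ is totally real and $\GEN{A,\phi_{-1}}=A$, so the rank is $[F:\Q]-1=\frac{\varphi(p^{n})}{|A|}-1$; if $\phi_{-1}\notin A$ then $F$ is totally complex and $|\GEN{A,\phi_{-1}}|=2|A|$, so the rank is $\frac{[F:\Q]}{2}-1=\frac{\varphi(p^{n})}{2|A|}-1$. In both cases this equals $|I\setminus\{1\}|$, the cardinality of the proposed set. Hence it suffices to prove that the set generates a subgroup of finite index in $\U(\O_{F})$; multiplicative independence will then be automatic, since a finite-index (hence full-rank) subgroup of a finitely generated abelian group cannot be generated by fewer than its rank many elements.

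To produce finite index I would start from the set $B=\{\eta_{k}(\zeta_{p^{n}})\mid 1<k<\frac{p^{n}}{2},\ p\nmid k\}$, which generates a subgroup $W$ of finite index in $\U(\Z[\zeta_{p^{n}}])$ by \cite[Theorem 8.2]{1982Washington}. Applying the norm $\pi_{A}$, the group $\pi_{A}(W)$ is generated by $\{\pi_{A}(\eta_{k}(\zeta_{p^{n}}))\mid 1<k<\frac{p^{n}}{2},\ p\nmid k\}$ and has finite index in $\pi_{A}(\U(\Z[\zeta_{p^{n}}]))$. The latter has finite index in $\U(\O_{F})$: for $u\in\U(\O_{F})$ one has $N_{\Q(\zeta_{p^{n}})/F}(u)=u^{|A|}$, so $\U(\O_{F})^{|A|}\subseteq\pi_{A}(\U(\Z[\zeta_{p^{n}}]))$, and $\U(\O_{F})^{|A|}$ has finite index in $\U(\O_{F})$. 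Consequently $\pi_{A}(W)$ already has finite index in $\U(\O_{F})$.

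The heart of the argument is to collapse these generators onto those indexed by $I$, using the cyclotomic relations (\ref{Cycleq1})--(\ref{Cycleq5}). The key computation is that $\pi_{A}(\eta_{r}(\zeta_{p^{n}}))=1$ whenever $\phi_{r}\in A$: since $A$ is a group, multiplication by $r$ permutes the residues $\{s\mid \phi_{s}\in A\}$, so the numerator $\prod_{\phi_{s}\in A}(1-\zeta_{p^{n}}^{rs})$ and the denominator $\prod_{\phi_{s}\in A}(1-\zeta_{p^{n}}^{s})$ of $\pi_{A}(\eta_{r}(\zeta_{p^{n}}))$ coincide. Combined with (\ref{Cycleq2}) this yields $\pi_{A}(\eta_{rk}(\zeta_{p^{n}}))=\pi_{A}(\eta_{k}(\zeta_{p^{n}}))$ for every $\phi_{r}\in A$, while (\ref{Cycleq5}) gives $\pi_{A}(\eta_{-k}(\zeta_{p^{n}}))\equiv\pi_{A}(\eta_{k}(\zeta_{p^{n}}))$ up to a root of unity. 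Hence, modulo the finite torsion subgroup $\mu$ of $\U(\O_{F})$, the value $\pi_{A}(\eta_{k}(\zeta_{p^{n}}))$ depends only on the coset of $k$ in $\U(\Z/p^{n}\Z)/\GEN{A,\phi_{-1}}$, and is a root of unity when that coset is trivial. Writing each admissible $k$ as $k\equiv\pm r k_{0}\bmod p^{n}$ with $\phi_{r}\in A$ and $k_{0}\in I$, I would deduce $\pi_{A}(W)\subseteq V\mu$, where $V$ denotes the group generated by the proposed set; as $\pi_{A}(W)$ has finite index and $[V\mu:V]\le|\mu|<\infty$, it follows that $V$ has finite index in $\U(\O_{F})$.

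Putting these together, $V$ is a finite-index subgroup of $\U(\O_{F})$ generated by exactly $\mathrm{rank}\,\U(\O_{F})$ elements, so by the remark in the first paragraph these generators are multiplicatively independent, which is precisely the assertion that the set is a virtual basis. The main obstacle I anticipate is the reduction of the third paragraph: one must verify carefully that the norm is insensitive to the $A$-action, via the telescoping identity $\pi_{A}(\eta_{r})=1$ for $\phi_{r}\in A$, and keep strict track of the real case $\GEN{A,\phi_{-1}}=A$ versus the complex case $|\GEN{A,\phi_{-1}}|=2|A|$, so that the count of generators matches the Dirichlet rank on the nose. The finite-index statement for the norm image and the passage through \cite[Theorem 8.2]{1982Washington} are comparatively routine.
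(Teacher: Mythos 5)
Your proof is correct and follows essentially the same route as the paper: compute the rank of $\U\left(\Z[\zeta_{p^n}]^{A}\right)$ via Dirichlet's theorem, push the cyclotomic units down through $\pi_A$ to obtain a subgroup of finite index, collapse the generating set onto the coset representatives in $I$ using the identities (\ref{Cycleq1})--(\ref{Cycleq5}), and conclude independence because the number of generators matches the rank. The only (minor) difference is in the reduction step, where you prove $\pi_A(\eta_r(\zeta_{p^n}^k))=1$ for $\phi_r\in A$ directly by permuting the factors of the norm, while the paper reaches the same torsion conclusion from the relation $(\pi_{A}(\eta_{r^t}(\zeta_{p^n}^i)))^2 = \pi_{A}(\eta_{r^{2t}}(\zeta_{p^n}^i))$ and induction.
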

\begin{proof}
Assume $A=\GEN{\phi_r}$ or $A=\GEN{\phi_r}\times\GEN{\phi_{-1}}$. In both cases, we set $l=|\GEN{\phi_r}|$.

The arguments in the paragraph before Theorem~\ref{rank} show that the unit group $\U\left(\Z[\zeta_{p^n}]^{A}\right)$ has rank $\frac{p^{n-1}(p-1)}{ld}-1=|I|-1$, where $d=2$ if $-1\not\in\GEN{r}$ and $d=1$ otherwise. Moreover $ld=|\GEN{A,\phi_{-1}}|$.

We noticed before that the cyclotomic units of the form $\eta_k(\zeta_{p^n})$ generate a subgroup of finite index in $\U(\Z[\zeta_{p^n}])$. Therefore, for every unit $u$ of $\Z[\zeta_{p^n}]^{A}$, $u^m=\prod_{i=1}^h \eta_{k_i}(\zeta_{p^n})$ for some integers $m,k_1,\dots,k_h$. Then, $u^{m|A|}=\pi_{A}(u) = \prod_{i=1}^h \pi_{A}(\eta_{k_i}(\zeta_{p^n}))$. Hence, it is clear that $$\{\pi_{A}(\eta_k(\zeta_{p^n}))\mid k\in\U(\Z/{p^n\Z})\}$$ generates a subgroup of finite index in $\U\left(\Z[\zeta_{p^n}]^{A}\right)$.

First consider the case when $d=1$ (i.e. $-1\in\GEN{r}$). Because of (\ref{Cycleq2}), we have $$\eta_{r^ti}(\zeta_{p^n})=\eta_{i}(\zeta_{p^n})\eta_{r^t}(\zeta_{p^n}^i), $$
for $i\in I$ and $0\leq t\leq l-1$. Note that $\pi_{A}(\eta_{r^t}(\zeta_{p^n}^i))=\pi_{A}(\eta_{r^t}(\zeta_{p^n}^{ir^t}))$, for $0\leq t\leq l-1$. Again by (\ref{Cycleq2}), we deduce that $$(\pi_{A}(\eta_{r^t}(\zeta_{p^n}^i)))^2 = \pi_{A}(\eta_{r^t}(\zeta_{p^n}^i))\pi_{A}(\eta_{r^t}(\zeta_{p^n}^{ir^t}))=\pi_{A}(\eta_{r^{2t}}(\zeta_{p^n}^i)),$$ and hence it follows by induction and equations (\ref{Cycleq1}) and (\ref{Cycleq3}) that $\pi_{A}(\eta_{r^t}(\zeta_{p^n}^i))$ is torsion. Hence $$\{\pi_{A}(\eta_{i}(\zeta_{p^n}))\mid i\in I\}$$ still generates a subgroup of finite index in $\U\left(\Z[\zeta_{p^n}]^{A}\right)$.

Now consider the case when $d=2$ (i.e. $-1\not\in\GEN{r}$). Let $J=I\cup -I$. Then $J$ is a set of coset representatives of $\U(\Z/{p^n\Z})$ modulo $\GEN{r}$. By the same arguments as above, we can deduce that $$\{\pi_{A}(\eta_{k}(\zeta_{p^n}))\mid k\in J\}$$ generates a subgroup of finite index in $\U\left(\Z[\zeta_{p^n}]^{A}\right)$. If $i\in I$ then, by equation (\ref{Cycleq5}), we have that $\pi_{A}(\eta_{-i}(\zeta_{p^n}))=\pi_{A}(-\zeta_{p^n}^{-i}) \pi_{A}(\eta_i(\zeta_{p^n}))$ and $\pi_{A}(-\zeta_{p^n}^{-i})$ is of finite order. Thus $$\{\pi_{A}(\eta_i(\zeta_{p^n}))\}\mid i\in I\}$$ still generates a subgroup of finite index in $\U\left(\Z[\zeta_{p^n}]^{A}\right)$.

Now, in both cases, by equation (\ref{Cycleq3}), we can exclude $k=1$. And since the size now coincides with the rank, the set $$\{\pi_{A}(\eta_{k}(\zeta_{p^n}))\mid k\in I\setminus\{1\}\}$$ is a virtual basis of $\U\left(\Z[\zeta_{p^n}]^{A}\right)$.
\end{proof}

The next Lemma and Proposition give a link with generalized Bass units. It is a translation of Proposition 4.2 in \cite{2012JdRVG} to the language of generalized Bass units. For the formulation we need some notation.

Let $H$ be a finite group, $K$ a subgroup of $H$ and $g\in H$ such that $H/K=\GEN{gK}$. Put $\Hc=\{L\leq H \mid K\leq L\}$. For every $L\in \Hc$, we fix a linear representation $\rho_L$ of $H$ with kernel $L$. Note that $\rho_L(\Q H)=\Q\left( \zeta_{[H:L]} \right)$ and $\rho_L(g)=\zeta_{[H:L]}$.

The following Lemma is a direct consequence of \cite[Lemma 2.1]{2012JdRVG} and the natural isomorphism $\Q (H/K)\simeq \Q H\suma{K}$.

\begin{lemma}
Let $K$ and $g\in H$ be as above, $L\in \Hc$ and $M$ an arbitrary subgroup of $H$. Let $l=|L\cap M|$, $t=[M:L\cap M]$ and let $k$ and $m$ be positive integers such that $(k,t)=1$ and $k^m \equiv 1 \mod |gu|$ for every $u\in M$.
Then
    $$\prod_{u\in M} \rho_L(\Bass{k}{mn_{H,K}}{gu\suma{K} +1-\suma{K}}) = \eta_k(\rho_L(g)^t)^{lmn_{H,K}}.$$
\end{lemma}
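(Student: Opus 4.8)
The plan is to transport the product through the linear representation $\rho_L$ one factor at a time, turning each generalized Bass unit into a power of a cyclotomic unit, and then to evaluate the resulting product of cyclotomic units over the coset $\rho_L(g)\rho_L(M)$. First I would observe that the hypothesis $k^m\equiv 1 \mod |gu|$ forces both $\gcd(k,|gu|)=1$ and $k^{mn_{H,K}}\equiv 1 \mod |gu|$ for every $u\in M$. Since $H/K$ is cyclic, $K$ is normal in $H$, so the identity $\Bass{k}{mn_{H,K}}{1-\suma{K}+gu\suma{K}}=1-\suma{K}+\Bass{k}{mn_{H,K}}{gu}\suma{K}$ recalled in this section applies. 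Because $K\leq L=\ker\rho_L$ we have $\rho_L(\suma{K})=1$, so applying $\rho_L$ collapses the left-hand side to $\rho_L(\Bass{k}{mn_{H,K}}{gu})$. Evaluating the Bass unit at the root of unity $\rho_L(gu)=\rho_L(g)\rho_L(u)$ gives $\rho_L(\Bass{k}{mn_{H,K}}{gu})=\eta_k(\rho_L(gu))^{mn_{H,K}}$: the tail term $\frac{1-k^{mn_{H,K}}}{|gu|}(1+\rho_L(gu)+\dots+\rho_L(gu)^{|gu|-1})$ vanishes whenever $\rho_L(gu)\neq 1$, and the case $\rho_L(gu)=1$ is covered by the conventions $\eta_k(1)=1$ and $\Bass{k}{mn_{H,K}}{1}=1$. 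This is precisely \cite[Lemma~2.1]{2012JdRVG} read through the isomorphism $\Q(H/K)\simeq\Q H\suma{K}$.

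It then remains to evaluate $\prod_{u\in M}\eta_k(\rho_L(gu))^{mn_{H,K}}$. Writing $\zeta=\rho_L(g)$, of order $d=[H:L]$, the restriction $\rho_L|_M$ has kernel $M\cap L$, so $\rho_L(M)$ is the subgroup of $\GEN{\zeta}$ of order $t=[M:M\cap L]$ and each of its elements is the image of exactly $l=|M\cap L|$ elements $u$. Hence $\prod_{u\in M}\eta_k(\rho_L(gu))=\left(\prod_{\omega\in\rho_L(M)}\eta_k(\zeta\omega)\right)^{l}$, and I would finish by proving $\prod_{\omega\in\rho_L(M)}\eta_k(\zeta\omega)=\eta_k(\zeta^t)$. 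Writing $\eta_k(\zeta\omega)=(1-(\zeta\omega)^k)/(1-\zeta\omega)$ and using that the $t$ elements $\zeta\omega$ are exactly the $t$-th roots of $\zeta^t$, the identity $X^t-\zeta^t=\prod_{X_0^t=\zeta^t}(X-X_0)$ evaluated at $X=1$ gives denominator $1-\zeta^t$; since $\gcd(k,t)=1$ the map $\omega\mapsto\omega^k$ permutes $\rho_L(M)$, so the numerators $1-(\zeta\omega)^k$ run over $1-X_0$ with $X_0^t=\zeta^{kt}$ and multiply to $1-\zeta^{kt}$. The ratio is $\eta_k(\zeta^t)$, and raising to the power $lmn_{H,K}$ yields the claim.

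The only delicate point is this cyclotomic computation, and in particular the degenerate case $\zeta^t=1$, which occurs exactly when $M$ surjects onto $H/L$. There both the numerator and the denominator vanish, so instead one argues directly that $\prod_{\xi^d=1}\eta_k(\xi)=1$; this uses $\gcd(k,d)=1$, which is available because $e\in M$ gives $\gcd(k,|g|)=1$ and $d\mid|g|$. Everything else is a routine transfer of the ordinary Bass unit statement to generalized Bass units via $\rho_L(\suma{K})=1$, so I expect no further obstacle.
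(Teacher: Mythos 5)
Your proof is correct and follows the same route the paper intends: the paper gives no argument beyond citing Lemma~2.1 of \cite{2012JdRVG} together with the isomorphism $\Q(H/K)\simeq \Q H\suma{K}$, and your write-up supplies exactly that transfer (via $\rho_L(\suma{K})=1$) plus a correct self-contained proof of the underlying cyclotomic identity $\prod_{\omega\in\rho_L(M)}\eta_k(\zeta\omega)=\eta_k(\zeta^t)$, including the degenerate case $\zeta^t=1$. No gaps.
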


Let $H$ be a finite group and $K$ a subgroup of $H$ such that $H/K=\GEN{gK}$ is a cyclic group of order $p^n$. It follows that the subgroups of $H/K$ form a chain, hence $\Hc=\{L \mid K\leq L\leq H\}=\{H_j=\GEN{g^{p^{n-j}},K}\mid 0\leq j\leq n\}$. A crucial property to prove the next result.

Let $k$ be a positive integer coprime with $p$ and let $r$ be an arbitrary integer. For every $0\le j \le s \le n$ we construct recursively the following products of generalized Bass units of $\Z H$:
$$c_s^s(H,K,k,r)=1,$$ and, for $0\le j\le s-1$,
\begin{eqnarray*}
 c_j^s(H,K,k,r)&=&\left(\prod_{h\in H_j}\Bass{k}{O_{p^n}(k)n_{H,K}}{g^{rp^{n-s}}h\suma{K} +1-\suma{K}}\right)^{p^{s-j-1}}  \\ &&\left( \prod_{l=j+1}^{s-1}c_l^s(H,K,k,r)\inv \right) \left(\prod_{l=0}^{j-1} c_l^{s+l-j}(H,K,k,r)\inv\right).
\end{eqnarray*}
Here, for each positive integer $l$, and each $k$ coprime to $l$, we denote by $O_l(k)$ the multiplicative order of $k$ modulo $l$. We also agree that by definition an empty product equals 1.

The proof of the following statement is identical to that of Proposition 4.2 in \cite{2012JdRVG}.

\begin{proposition}\label{projections}
Let $H$ be a finite group and $K$ a subgroup of $H$ such that $H/K=\GEN{gK}$ is cyclic of order $p^n$. Let $\Hc=\{L\leq H \mid K\leq L\}=\{H_j=\GEN{g^{p^{n-j}},K}\mid 0\leq j\leq n\}$.
Let $k$ be a positive integer coprime with $p$ and let $r$ be an arbitrary integer. Then \begin{equation}
\rho_{H_{j_1}}(c_j^s(H,K,k,r)) = \left\{
\begin{array}{ll}
\eta_k(\zeta^r_{p^{s-j}})^{O_{p^n}(k)p^{s-1}n_{H,K}}, & \text{if } j=j_1;\\
1, & \text{if } j\neq j_1.
\end{array}\right.
\end{equation}
for every $0\le j,j_1\le s\le n$.
\end{proposition}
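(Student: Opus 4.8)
The plan is to prove the identity by induction, substituting the recursive definition of $c_j^s(H,K,k,r)$ into the ring homomorphism $\rho_{H_{j_1}}$ and using the preceding Lemma as the sole computational input. The structural fact that drives everything is that, since $H/K=\GEN{gK}$ is cyclic of order $p^n$, the family $\Hc$ is a single chain $K=H_0\le H_1\le\cdots\le H_n=H$; consequently every intersection $H_{j_1}\cap H_j=H_{\min(j,j_1)}$ and every index among the $H_i$ is a clean power of $p$, which is what keeps the bookkeeping finite and explicit.

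First I would evaluate $\rho_{H_{j_1}}$ on the basic product
\[
B_j^s=\prod_{h\in H_j}\Bass{k}{O_{p^n}(k)n_{H,K}}{g^{rp^{n-s}}h\suma{K}+1-\suma{K}}.
\]
Applying the Lemma with $L=H_{j_1}$, $M=H_j$ and the element $g^{rp^{n-s}}$ in the role of $g$, one has $l=|H_{\min(j,j_1)}|$ and $t=[H_j:H_{\min(j,j_1)}]$, which is $1$ when $j\le j_1$ and $p^{j-j_1}$ when $j>j_1$; the hypothesis $(k,t)=1$ holds because $k$ is coprime with $p$, and the order condition is guaranteed by the definition of $n_{H,K}$. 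Since $\rho_{H_{j_1}}(g^{rp^{n-s}})=\zeta_{p^{n-j_1}}^{rp^{n-s}}$ and $p\nmid r$, the quantity $\rho_{H_{j_1}}(g^{rp^{n-s}})^{t}$ is a primitive $p^{\,s-\max(j,j_1)}$-th root of unity, which we write as $\zeta_{p^{s-\max(j,j_1)}}^{r}$. Thus the Lemma shows $\rho_{H_{j_1}}(B_j^s)$ is an explicit power of $\eta_k(\zeta_{p^{s-\max(j,j_1)}}^{r})$ with exponent $|H_{\min(j,j_1)}|\,O_{p^n}(k)\,n_{H,K}$; note in particular that it is trivial as soon as $j_1=s$, because then the root degenerates to $1$ and $\eta_k(1)=1$.

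I would then run the induction on $s$, and for each fixed $s$ downward on $j$ starting from the base case $c_s^s=1$. This is well founded because the recursion for $c_j^s$ calls only the units $c_l^s$ with $l>j$ (same $s$, already handled in the downward sweep) and the units $c_l^{s+l-j}$ with $l<j$ (for which the second parameter $s+l-j$ is strictly smaller than $s$, handled by the outer induction). On the diagonal $j=j_1$ every correction factor has first index $l\ne j$ and therefore projects to $1$ under $\rho_{H_j}$ by the inductive hypothesis, so only $\rho_{H_j}(B_j^s)^{p^{s-j-1}}$ survives and yields the claimed cyclotomic power of $\eta_k(\zeta_{p^{s-j}}^{r})$. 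Off the diagonal, for fixed $j_1\ne j$ exactly one correction factor is nontrivial on the $H_{j_1}$-component: the factor $c_{j_1}^{s}$ inside $\prod_{l=j+1}^{s-1}(c_l^s)^{-1}$ when $j<j_1\le s-1$, and the factor $c_{j_1}^{s+j_1-j}$ inside $\prod_{l=0}^{j-1}(c_l^{s+l-j})^{-1}$ when $j_1<j$. The index shift $s\mapsto s+l-j$ in the second product is precisely what makes the root attached to this surviving factor equal to $\zeta_{p^{s-j}}^{r}$, which is exactly the root the Lemma attaches to $\rho_{H_{j_1}}(B_j^s)$ in this range; matching the $p$-power exponents then gives $\rho_{H_{j_1}}(c_j^s)=1$.

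The main obstacle is this off-diagonal exponent-and-root bookkeeping: one must verify, uniformly in $j_1$, that the root $\zeta_{p^{s-\max(j,j_1)}}^{r}$ delivered by the Lemma on each component coincides with the root carried by the unique surviving correction term, and that the powers of $p$ match after accounting for the factor $|H_{\min(j,j_1)}|$ (which enters identically in $B_j^s$ and in the inductive values) together with the common factor $O_{p^n}(k)\,n_{H,K}$. A secondary subtlety is that the cyclotomic identities (\ref{Cycleq1})--(\ref{Cycleq5}) can introduce torsion factors; these must be shown to cancel, so that one reaches equality with $1$ (respectively with the pure cyclotomic power) on the nose rather than merely up to a root of unity. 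As all of this is controlled by the chain structure of $\Hc$ and by the Lemma, the argument is a line-by-line transcription of the proof of Proposition~4.2 in \cite{2012JdRVG}.
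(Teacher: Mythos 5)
Your overall strategy is the right one: the paper gives no proof of its own (it defers to Proposition~4.2 of \cite{2012JdRVG}), and the double induction you describe --- outer on $s$, inner downward on $j$, with the Lemma supplying the value of $\rho_{H_{j_1}}$ on the basic product $B_j^s$ and the recursion arranged so that exactly one correction term survives on each component --- is precisely the structure of that proof. Your identification of $t=[H_j:H_{\min(j,j_1)}]$, of the root $\zeta^{r}_{p^{s-\max(j,j_1)}}$, of the unique surviving factor ($c_{j_1}^{s}$ when $j<j_1<s$, and $c_{j_1}^{s+j_1-j}$ when $j_1<j$), and of the role of the index shift $s\mapsto s+l-j$ in making the roots agree are all correct. (Two small remarks: $r$ is an arbitrary integer, so ``$p\nmid r$'' is not available and the root need not be primitive, but the identity $\rho_{H_{j_1}}(g^{rp^{n-s}})^{t}=\zeta^{r}_{p^{s-\max(j,j_1)}}$ holds regardless; and no torsion factors from (\ref{Cycleq1})--(\ref{Cycleq5}) ever enter, since the Lemma already gives exact equalities.)

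There are, however, two genuine gaps. First, the induction hypothesis you carry is the Proposition itself, which only covers $j_1\le s$; but the recursion forces you to evaluate $\rho_{H_{j_1}}(c_l^{s+l-j})$ in situations where $j_1>s+l-j$ (for instance $s=4$, $j=3$, $j_1=2$, $l=0$ produces the factor $c_0^{1}$), and these are not covered by the statement you are inducting on. You must strengthen the claim to include $\rho_{H_{j_1}}(c_j^{s})=1$ whenever $j_1\ge s$ --- easy, since $g^{p^{n-s}}\in H_s\subseteq H_{j_1}=\ker\rho_{H_{j_1}}$ kills every factor of $B_j^s$, but it has to be part of the induction. Second, the ``exponent bookkeeping'' you defer as the main obstacle is the entire content of the proof, and carrying it out exposes a mismatch: the Lemma gives $\rho_{H_{j_1}}(B_j^s)=\eta_k(\zeta^r_{p^{s-\max(j,j_1)}})^{\,l\,O_{p^n}(k)n_{H,K}}$ with $l=|H_{j_1}\cap H_j|=|K|\,p^{\min(j,j_1)}$ (the product defining $c_j^s$ runs over all of $H_j\supseteq K$, and elements in the same $K$-coset contribute identical factors), so the diagonal value comes out as $\eta_k(\zeta^r_{p^{s-j}})^{|K|\,O_{p^n}(k)p^{s-1}n_{H,K}}$. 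The extra factor $|K|$ propagates consistently, so the off-diagonal cancellation you describe does go through and yields $1$ exactly, but the diagonal exponent does not literally match the one displayed in the Proposition unless the product is read as running over a transversal of $K$ in $H_j$. You should either carry the computation through and record the exponent you actually obtain, or observe explicitly that only some positive power of $\eta_k(\zeta^r_{p^{s-j}})$ is needed for the subsequent applications.
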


We are now in a position to state our main Theorem on central units. Observe that we are again identifying $N_G(K)/H$ with a subgroup of $\U(\Z/[H:K]\Z)$ for a strong Shoda pair $(H,K)$ of $G$.

\begin{theorem}\label{basis}
Let $G$ be a strongly monomial group such that there is a complete and non-redundant set $\mathcal{S}$ of strong Shoda pairs $(H,K)$ of $G$ with the property that each $[H:K]$ is a prime power. For every $(H,K)\in \mathcal{S}$, let $T_K$ be a right transversal of $N_G(K)$ in $G$, let $I_{(H,K)}$ be a set of representatives of $\U(\Z/[H:K]\Z)$ modulo $\GEN{N_G(K)/H,-1}$ containing $1$ and let $[H:K]=p_{(H,K)}^{n_{(H,K)}}$, with $p_{(H,K)}$ prime.  Then $$\left\{ \prod_{t\in T_K}\ \prod_{x\in N_G(K)/H} c_0^{n_{(H,K)}}(H,K,k,x)^t : (H,K) \in \mathcal{S}, k\in I_{(H,K)}\setminus \{1\} \right\}$$ is a virtual basis of $\mathcal{Z}(\U(\Z G))$.
\end{theorem}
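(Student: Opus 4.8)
The plan is to prove both halves of the statement separately: that the proposed family has cardinality equal to the rank of $\mathcal{Z}(\U(\Z G))$ computed in Theorem~\ref{rank}, and that it generates a subgroup of finite index. Since a subset of a finitely generated abelian group of cardinality equal to its torsion-free rank that generates a finite-index subgroup is automatically multiplicatively independent, these two facts together yield a virtual basis. For the cardinality, fix $(H,K)\in\mathcal{S}$ and write $[H:K]=p^{n_{(H,K)}}$ with $p$ prime, $N=N_G(K)$. As $I_{(H,K)}$ is a transversal of $\U(\Z/[H:K]\Z)$ modulo $\GEN{N/H,-1}$ and $|\GEN{N/H,\phi_{-1}}|=[N:H]\,k_{(H,K)}$ (this is the product $ld$ of Lemma~\ref{norm}, with $k_{(H,K)}=1$ exactly when $-1\in N/H$, i.e.\ when the center is totally real), one gets $|I_{(H,K)}\setminus\{1\}|=\frac{\varphi([H:K])}{[N:H]\,k_{(H,K)}}-1$; summing over $\mathcal{S}$ reproduces exactly the rank of Theorem~\ref{rank}.

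For the finite-index part I would analyze each unit $U_{(H,K),k}=\prod_{t\in T_K}\prod_{x\in N/H}c_0^{n_{(H,K)}}(H,K,k,x)^t$ component by component. First I would record, from the definition of the generalized Bass units, that each $c_0^{n_{(H,K)}}(H,K,k,x)$ lies in $\Z H\suma{K}+\Z(1-\suma{K})$ and equals $1$ on the summand $\Z H(1-\suma{K})$. Since $K\lhd H$ and $H/K$ is cyclic, $\Z H\suma{K}\cong\Z(H/K)$ is commutative, so $c_0^{n_{(H,K)}}(H,K,k,x)$ is central in $\Z H$. Proposition~\ref{projections} (with $s=n_{(H,K)}$, $j=0$) then shows it projects under the faithful character $\rho_K$ of $H/K$ to $\eta_k(\zeta_{[H:K]}^{x})^{e}$ for a fixed positive integer $e=O_{[H:K]}(k)\,p^{\,n_{(H,K)}-1}\,n_{H,K}$, and trivially under every $\rho_{H_{j_1}}$ with $j_1\neq 0$. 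Using $\eta_k(\zeta_{[H:K]}^{x})=\sigma_x(\eta_k(\zeta_{[H:K]}))$ and multiplying over $x\in N/H$, the inner factor $b:=\prod_{x\in N/H}c_0^{n_{(H,K)}}(H,K,k,x)$ projects under $\rho_K$ to $\pi(\eta_k(\zeta_{[H:K]}))^{e}\in F=\Q(\zeta_{[H:K]})^{N/H}$. In particular $b$ is centralized by $H$ (being $\Z H$-central) and invariant under the $N/H$-conjugation forced by the product over $x$, hence central in $\Z N$, trivial on $1-\varepsilon(H,K)$ and equal to $\pi(\eta_k(\zeta_{[H:K]}))^{e}$ on $\varepsilon(H,K)$.

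Next I would assemble over $T_K$. Because $b$ is central in $\Z N$ and $N=C_G(\varepsilon(H,K))$, for every $s\in G$ we have $U_{(H,K),k}^{s}=\prod_{t\in T_K}b^{ts}=U_{(H,K),k}$, so $U_{(H,K),k}$ is central in $\Z G$. Since the $G$-conjugates $\varepsilon(H,K)^t$ are orthogonal (the strong Shoda pair condition) with sum $e(G,H,K)$, and each $b^t$ is trivial off $\varepsilon(H,K)^t$, the unit $U_{(H,K),k}$ is trivial on every component $e(G,H',K')$ with $(H',K')\neq(H,K)$; thus $U_{(H,K),k}\in \Z G\,e(G,H,K)+\Z(1-e(G,H,K))$, and its image in the center $F$ of $\Q G\,e(G,H,K)\cong M_{[G:N]}(\Q(\zeta_{[H:K]})*N/H)$ equals $\pi(\eta_k(\zeta_{[H:K]}))^{e}$. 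By Lemma~\ref{norm}, the elements $\pi(\eta_k(\zeta_{[H:K]}))$ for $k\in I_{(H,K)}\setminus\{1\}$ form a virtual basis of $\U(\Z[\zeta_{[H:K]}]^{N/H})$, whose units are commensurable with those of the relevant order in $F$; as positive powers still generate a finite-index subgroup, the $U_{(H,K),k}\,e(G,H,K)$ cover the central units of this component up to finite index. Finally, from the factorization $u=\prod_{(H,K)}(1-e(G,H,K)+u\,e(G,H,K))$ of an arbitrary central unit, together with the fact that each $U_{(H,K),k}$ is supported on a single Wedderburn component, the whole family generates a subgroup of finite index in $\mathcal{Z}(\U(\Z G))$.

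The main obstacle is the middle step: verifying that $b$ is genuinely central in $\Z N$ and supported exactly on $\varepsilon(H,K)$, and that the symmetrization $\prod_{t\in T_K}$ turns it into a $G$-central unit concentrated in one component with central value the norm $\pi(\eta_k(\zeta_{[H:K]}))^{e}$. This is where the orthogonality of the strong Shoda conjugates, the $N/H$-invariance produced by the product over $x$, and the projection formula of Proposition~\ref{projections} must be combined carefully; once this is established, the index count against Theorem~\ref{rank} and Lemma~\ref{norm} is routine.
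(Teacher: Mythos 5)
Your proposal is correct and follows essentially the same route as the paper: both rest on Proposition~\ref{projections} to compute the component-wise projections of the $c_0^{n}$ units, on Lemma~\ref{norm} to identify $\{\pi(\eta_k(\zeta_{p^n}))\}$ as a virtual basis of the fixed ring $\U(\Z[\zeta_{p^n}]^{N/H})$, and on the orthogonality of the $G$-conjugates of $\varepsilon(H,K)$ to assemble the product over $T_K$ into a central unit supported on a single Wedderburn component. The only cosmetic difference is that you certify multiplicative independence by a global cardinality count against Theorem~\ref{rank}, whereas the paper does it component by component via Lemma~\ref{norm}, whose proof internally uses the same rank computation.
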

\begin{proof}
It is sufficient to prove for every $(H,K)\in \mathcal{S}$ that $$\left\{ \prod_{t\in T}\ \prod_{x\in N/H} c_0^{n}(H,K,k,x)^t : k\in I\setminus \{1\} \right\}$$ is a virtual basis of the center of $1-e(G,H,K)+\U(\Z G e(G,H,K))$, with $N=N_G(K)$, $T=T_K$, $I=I_{(H,K)}$ and $n=n_{(H,K)}$. To prove this, we may assume without loss of generality that $K$ is normal in $G$ (i.e. $N=G$). Indeed, assume we can compute a virtual basis $\{u_1,\ldots,u_s\}$ of the center of $1-\varepsilon(H,K)+\U(\Z N\varepsilon(H,K))$. Each $u_i$ is of the form $1-\varepsilon(H,K)+v_i\varepsilon(H,K)$ for some $v_i\in \Z N$ and $u_i\inv = 1-\varepsilon(H,K)+v_i'\varepsilon(H,K)$ for some $v_i'\in \Z N$. Then, $w_i=\prod_{t\in T} u_i^t=1-e(G,H,K)+\sum_{t\in T}v_i^t\varepsilon(H,K)^t$ is a unit in the center of $1-e(G,H,K)+\Z Ge(G,H,K)$ since the $\varepsilon(H,K)^t$ are mutually orthogonal idempotents and they also are orthogonal to $1-e(G,H,K)$. Then $w_1,\ldots,w_s$ are independent because so are $u_1,\ldots,u_s$ and they form a virtual basis of the center of $1-e(G,H,K)+\Z Ge(G,H,K)$.

From now on we thus will assume that $K$ is normal in $G$ and $[H:K]=p^n$ with $p$ prime. Thus $N=G$ and $T=\{1\}$. We have to prove that 	$$\left\{ \prod_{x\in G/H} c_0^{n}(H,K,k,x) : k\in I\setminus \{1\} \right\}$$ is a virtual basis of the center of $1-e(G,H,K)+\U(\Z G e(G,H,K))$.

Assume first that $H=G$. Then $\Q Ge(G,G,K)\simeq \Q( \zeta_{p^n})$. Consider the algebra $\Q Ge(G,G,K) + \Q(1-e(G,G,K))$ inside the algebra $\Q G\suma{K}+ \Q(1-\suma{K})$. By Proposition~\ref{projections}, the elements $c_{0}^n(G,K,k,1)$ project to $\eta_k(\zeta_{p^n})^{O_{p^n}(k)p^{n-1}n_{G,K}}$ in the simple component $\Q( \zeta_{p^n} )$ and trivially in all other components. Since we know that the set $\{\eta_k(\zeta_{p^n})\mid k\in I\setminus \{1\} \}$ is a virtual basis of $\U(\Z[\zeta_{p^n}])$ (Lemma~\ref{norm}), it follows that $\{c_{0}^n(G,K,k,1)\mid k\in I\setminus\{1\}\}$ is a virtual basis of $1-e(G,G,K)+\U(\Z Ge(G,G,K))$.

Now, assume that $G\ne H$ and consider the non-commutative simple component $\Q Ge(G,H,K) \simeq \Q H\varepsilon(H,K)*G/H$ with center $(\Q H\varepsilon(H,K))^{G/H}\simeq \Q(\zeta_{p^n})^{G/H}$. Consider the commutative algebra $(\Q H\varepsilon(H,K))^{G/H} + \Q(1-\varepsilon(H,K))$ inside the algebra $\Q H\suma{K}+ \Q(1-\suma{K})$. Since $H/K$ is a cyclic $p$-group, $G/H=\GEN{\phi_r}$ or $G/H=\GEN{\phi_r}\times\GEN{\phi_{-1}}$ for some $r$. Say $|\GEN{\phi_r}|=l$. By Lemma~\ref{norm}, the set $$\{\pi\left(\eta_k(\zeta_{p^n})\right)\mid k\in I\setminus\{1\}\}$$ is a virtual basis of $\U\left(\Z[\zeta_{p^n}]^{G/H}\right)$.

If $G/H$ is cyclic, by Proposition~\ref{projections} the elements $$c_0^n(H,K,k,1)c_0^n(H,K,k,r)\cdots c_0^n(H,K,k,r^{l-1})$$ project to $\pi(\eta_k(\zeta_{p^n}))^{O_{p^n}(k)p^{n-1}n_{H,K}}$ in the component $\Q( \zeta_{p^n} )^{G/H}$ and trivially in all other components of $\Q H$. Hence the set $$\{c_0^n(H,K,k,1)c_0^n(H,K,k,r)\cdots c_0^n(H,K,k,r^{l-1}) \mid k\in I\setminus\{1\}\}$$ is a virtual basis of $\mathcal{Z}(\U(\Z G e(G,H,K) +\Z(1-e(G,H,K))))$.

If $G/H$ is not cyclic, then the elements $$\prod_{i=0}^{l-1}\ \prod_{j=0}^1 c_0^n(H,K,k,r^i(-1)^j)$$ project to a power of $\pi(\eta_k(\zeta_{p^n}))$ in the component $\Q( \zeta_{p^n} )^{G/H}$ and trivially in all other components of $\Q H$. Hence also in this case we find a set $$\left\{\prod_{i=0}^{l-1}\ \prod_{j=0}^1 c_0^n(H,K,k,r^i(-1)^j) \mid k\in I\setminus\{1\}\right\}$$ which is a virtual basis of $\mathcal{Z}(\U(\Z G e(G,H,K) +\Z(1-e(G,H,K))))$ .

This finishes the proof, since we have now constructed a virtual basis in the center of each order $\Z Ge(G,H,K) + \Z(1-e(G,H,K))$, with $(H,K)\in \mathcal{S}$.
\end{proof}

\section{A complete set of orthogonal primitive idempotents in $\Q G$}

In this section we will focus on simple components of $\Q G$ of a finite group $G$ which are determined by a strong Shoda pair $(H,K)$ such that $\tau(nH,n'H)=1$ for all $n,n'\in N_G(K)$ (with notation as in Proposition~\ref{SSP}). For such a component, we describe a complete set of orthogonal primitive idempotents (and a complete set of matrix units). This construction is based on the isomorphism of Theorem~\ref{reiner} on classical crossed products with trivial twisting. Such a description, together with the description of the primitive central idempotent $e=e(G,H,K)$ determining the simple component, yields a complete set of irreducible modules. It also will allow us to construct units in Section~\ref{SectionMetacyclic} that determine a large subgroup of the full unit group $\U(\Z G)$.

Before we do so, we need a basis of $\Q(\zeta_{[H:K]})/\Q(\zeta_{[H:K]})^{N_G(K)/H}$ of the form $\{w^x\mid x\in N_G(K)/H\}$ with $w\in \Q(\zeta_{[H:K]})$. That such a basis exists follows from the well-known Normal Basis Theorem which states that if $E/F$ is a finite Galois extension, then there exists an element $w\in E$ such that $\{\sigma(w)\mid \sigma\in \Gal(E/F)\}$ is an $F$-basis of $E$, a so-called normal basis, whence $w$ is called normal in $E/F$.

\begin{theorem}\label{idempotents}
Let $(H,K)$ be a strong Shoda pair of a finite group $G$ such that $\tau(nH,n'H)=1$ for all $n,n'\in N_G(K)$. Let $\varepsilon=\varepsilon(H,K)$ and $e=e(G,H,K)$. Let $F$ denote the fixed subfield of $\Q H \varepsilon$ under the natural action of $N_G(K)/H$ and $[N_G(K):H]=n$.
Let $w$ be a normal element of $\Q H \varepsilon/F$ and $B$ the normal basis determined by $w$.
Let $\psi$ be the isomorphism between $\Q N_G(K) \varepsilon$ and the matrix algebra $M_{n}(F)$ with respect to the basis $B$ as stated in Theorem~\ref{reiner}.
Let $P,A\in M_{n}(F)$ be the matrices $$P= \left( \begin{array}{rrrrrr}
1 & 1 & 1 & \cdots & 1 & 1\\
1 & -1 & 0 & \cdots & 0 & 0\\
1 & 0 & -1 & \cdots & 0 & 0\\
\vdots & \vdots & \vdots & \ddots & \vdots & \vdots\\
1 & 0 & 0 & \cdots & -1 & 0\\
1 & 0 & 0 & \cdots & 0 & -1\\
\end{array} \right)
\quad \text{and} \quad
A= \left( \begin{array}{ccccc}
0 & 0 & \cdots & 0 & 1\\
1 & 0 & \cdots & 0 & 0\\
0 & 1 & \cdots & 0 & 0\\
\vdots & \vdots & \ddots & \vdots & \vdots\\
0 & 0 & \cdots & 0 & 0\\
0 & 0 & \cdots & 1 & 0\\
\end{array} \right).$$
Then $$\{x\widehat{T_1}\varepsilon x^{-1} \mid x\in T_2\GEN{x_e}\}$$ is a complete set of orthogonal primitive idempotents of $\Q G e$ where $x_e=\psi^{-1}(PAP^{-1})$, $T_1$ is a transversal of $H$ in $N_G(K)$ and $T_2$ is a right transversal of $N_G(K)$ in $G$. By $\widehat{T_1}$ we denote the element $\frac{1}{|T_1|}\sum_{t\in T_1}{t}$ in $\Q G$.
\end{theorem}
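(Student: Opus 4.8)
The plan is to push everything through the isomorphism $\psi$ of Theorem~\ref{reiner}, compute a complete set of orthogonal primitive idempotents of the ``local'' component $\Q N_G(K)\varepsilon\cong M_{n}(F)$ inside the matrix algebra, and then spread these across $\Q Ge$ by conjugating with the transversal $T_2$.

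First I would work inside $\Q N_G(K)\varepsilon$. By Proposition~\ref{SSP} and the hypothesis $\tau=1$, this is the classical crossed product $(\Q H\varepsilon/F,1)$, with $\Gal(\Q H\varepsilon/F)$ identified with $N_G(K)/H$; on the normal basis $B=\{w^x\mid x\in N_G(K)/H\}$ each $x$ acts as a permutation, so $\psi$ sends the crossed-product unit $u_x$ to the permutation matrix of $x$. Since $T_1$ is a transversal of $H$ in $N_G(K)$, we have $\widehat{T_1}\varepsilon=\frac{1}{n}\sum_{x\in N_G(K)/H}u_x$, and because $B$ is a \emph{normal} basis the matrices of the $x$ form the regular permutation representation of $N_G(K)/H$; summing them over the whole group gives the all-ones matrix $J$. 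Hence $\psi(\widehat{T_1}\varepsilon)=\frac{1}{n}J$, a rank-one idempotent of $M_{n}(F)$.

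The matrices $P$ and $A$ are then exactly what is needed to rotate this idempotent through all diagonal positions. The columns of $P$ are eigenvectors of $J$ (the first for the eigenvalue $n$, the others for $0$), so $\frac{1}{n}J=PE_{11}P^{-1}$, where $E_{ij}$ denote the standard matrix units. As $A$ is the cyclic permutation matrix, $A^iE_{11}A^{-i}=E_{i+1,i+1}$ and $A^{n}=I$; consequently, with $x_e=\psi^{-1}(PAP^{-1})$ one computes $\psi(x_e^i\widehat{T_1}\varepsilon x_e^{-i})=PE_{i+1,i+1}P^{-1}$ for $0\le i\le n-1$. These are the $P$-conjugates of $E_{11},\dots,E_{nn}$, hence orthogonal primitive idempotents of $M_{n}(F)$ summing to the identity, and $x_e^{n}=\varepsilon$ so $\GEN{x_e}$ has order $n$. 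Transporting back, $\{f_i=x_e^i\widehat{T_1}\varepsilon x_e^{-i}\mid 0\le i<n\}$ is a complete set of orthogonal primitive idempotents of $\Q N_G(K)\varepsilon$ with $\sum_i f_i=\varepsilon$.

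Finally I would lift to $\Q Ge$. Because the $G$-conjugates of $\varepsilon$ are orthogonal and $N_G(K)=C_G(\varepsilon)$, one has $\varepsilon\Q Ge\,\varepsilon=\varepsilon\Q G\varepsilon=\Q N_G(K)\varepsilon$; under $\Q Ge\cong M_{[G:N_G(K)]\,n}(F)$ this exhibits $\varepsilon$ as an idempotent whose corner algebra is $M_{n}(F)$, so $\varepsilon$ has rank $n$ and each $f_i$, being rank one in the corner, is primitive in $\Q Ge$. Writing a typical element of $T_2\GEN{x_e}$ as $x=tx_e^i$ gives $x\widehat{T_1}\varepsilon x^{-1}=tf_it^{-1}$; conjugation by the unit $te$ of $\Q Ge$ (note $e$ is central) preserves primitivity, idempotents sharing the same $t$ are orthogonal because the $f_i$ are, and idempotents with different $t$ sit under the orthogonal conjugates $\varepsilon^{t^{-1}}=t\varepsilon t^{-1}$ and so are orthogonal as well. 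Summing, $\sum_{t\in T_2}\sum_{i}tf_it^{-1}=\sum_{t\in T_2}t\varepsilon t^{-1}=e$, since $\{t\varepsilon t^{-1}\mid t\in T_2\}$ runs over the distinct $G$-conjugates of $\varepsilon$; being orthogonal primitive idempotents summing to the identity $e$ of $\Q Ge$, the set is complete.

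The heart of the argument — and the step I expect to be the main obstacle — is the identity $\psi(\widehat{T_1}\varepsilon)=\frac{1}{n}J$: this is precisely where both hypotheses enter, the trivial twisting to invoke Theorem~\ref{reiner} and land in $M_{n}(F)$, and the Normal Basis Theorem to turn the Galois action into the regular permutation representation so that averaging the $u_x$ yields $\frac{1}{n}J$. Everything afterwards is either linear algebra (reverse-engineering $P$ and $A$ to diagonalize and cyclically rotate $J$) or the standard corner/conjugation bookkeeping, the only delicate point there being to match the conjugation convention so that $\{t\varepsilon t^{-1}\mid t\in T_2\}$ is exactly the set of distinct conjugates summing to $e$.
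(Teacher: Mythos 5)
Your proposal is correct and follows essentially the same route as the paper: identify $\psi(\widehat{T_1}\varepsilon)$ with $\frac{1}{n}J$ via the regular action of $N_G(K)/H$ on the normal basis, diagonalize with $P$, rotate through the diagonal with $A$, and then distribute the resulting idempotents over the orthogonal $G$-conjugates of $\varepsilon$. The only (immaterial) difference is one of ordering: the paper reduces to the case $K\lhd G$ at the outset and then does the matrix computation, whereas you do the matrix computation in $\Q N_G(K)\varepsilon$ first and handle the conjugation by $T_2$ at the end.
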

\begin{proof}
Consider the simple component $$\Q Ge \simeq M_{[G:N]}(\Q N \varepsilon)\simeq M_{[G:N]}\left( (\Q H\varepsilon/F,1) \right)$$ of $\Q G$ with $N=N_G(K)$. Without loss of generality we may assume that $K$ is normal in $G$ and hence $N=G$. Indeed, if we obtain a complete set of orthogonal primitive idempotents of $\Q N\varepsilon$, then the conjugates by the transversal $T_2$ of $N$ in $G$ will give a complete set of orthogonal primitive idempotents of $\Q Ge$ since $e=\sum_{t\in T_2}\varepsilon^t$ and different $\varepsilon^t$'s are orthogonal.

From now on we assume that $N=G$ and $e=\varepsilon$. Then $B=\{w^{gH} : g\in T_1\}$. Since $G/H$ acts on $\Q He$ via the induced conjugation action on $H/K$ it easily is seen that the action of $G/H$ on $B$ is regular. Hence it is readily verified that for each $g\in T_1$, $\psi(ge)$ is a permutation matrix, and
$$\psi(\widehat{T_1}e)=\frac{1}{n}\left( \begin{array}{ccccc}
1 & 1 & \cdots & 1 & 1\\
1 & 1 & \cdots & 1 & 1\\
1 & 1 & \cdots & 1 & 1\\
\vdots & \vdots & \ddots & \vdots & \vdots\\
1 & 1 & \cdots & 1 & 1\\
1 & 1 & \cdots & 1 & 1\\
\end{array} \right).$$
Clearly $\psi(\widehat{T_1}e)$ has eigenvalues $1$ and $0$, with respective eigenspaces $V_1=\vect\{(1,1,\dots,1)\}$ and $V_0=\vect\{(1,-1,0,\dots,0),(1,0,-1,\dots,0),\dots,(1,0,0,\dots,-1)\}$, where $\vect(S)$ denotes the vector space generated by the set $S$. Hence $$\psi(\widehat{T_1}e)=PE_{11}P^{-1},$$ where we denote by $E_{ij}\in M_{n}(F)$ the matrices whose entries are all 0 except in the $(i,j)$-spot, where it is 1. One knows that $\{E_{11},E_{22},\dots,E_{nn}\}$ and hence also $$\{\psi(\widehat{T_1}e)=PE_{11}P^{-1},PE_{22}P^{-1},\dots,PE_{nn}P^{-1}\}$$ forms a complete set of orthogonal primitive idempotents of $M_{n}(F)$. Let $y=\psi(x_e)=PAP^{-1}$. As $$E_{22}=AE_{11}A^{-1}, \dots, E_{nn}=A^{n-1}E_{11}A^{-n+1}$$ we obtain that $$\{\psi(\widehat{T_1}e),y\psi(\widehat{T_1}e)y^{-1}, \dots, y^{n-1}\psi(\widehat{T_1}e)y^{-n+1}\}$$ forms a complete set of orthogonal primitive idempotents of $M_{n}(F)$. Hence, applying $\psi^{-1}$ gives us a complete set of orthogonal primitive idempotents of $\Q G e$.
\end{proof}

Next we will describe a complete set of matrix units in a simple component $\Q G e(G,H,K)$ for a strong Shoda pair $(H,K)$ of a finite group $G$.
\begin{corollary}\label{matrix units}
Let $(H,K)$ be a strong Shoda pair of a finite group $G$ such that $\tau(nH,n'H)=1$ for all $n,n'\in N$. We use the notation of Theorem~\ref{idempotents} and for every $x,x'\in T_2\GEN{x_e}$, let $$E_{xx'}=x\widehat{T_1}\varepsilon x'^{-1}.$$ Then $\{E_{xx'}\mid x,x'\in T_2\GEN{x_e}\}$ is a complete set of matrix units in $\Q G e$, i.e. $e=\sum_{x\in T_2\GEN{x_e}} E_{xx}$ and $E_{xy}E_{zw}=\delta_{yz}E_{xw}$, for every $x,y,z,w\in T_2\GEN{x_e}$.

Moreover $E_{xx}\Q G E_{xx}\simeq F$, where $F$ is the fixed subfield of $\Q H\varepsilon$ under the natural action of $N/H$.
\end{corollary}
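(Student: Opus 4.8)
The plan is to leverage the isomorphism $\psi$ of Theorem~\ref{reiner} to transport the entire statement into the matrix algebra $M_n(F)$, where the matrix-unit relations are essentially tautological, and then apply $\psi^{-1}$. As in the proof of Theorem~\ref{idempotents}, I first reduce to the case $N_G(K)=G$ (so $e=\varepsilon$), treating the general case afterwards by conjugating with the transversal $T_2$ and using that the $\varepsilon^t$ are mutually orthogonal. Once $N=G$, the key observation, already extracted in Theorem~\ref{idempotents}, is that $\psi(\widehat{T_1}\varepsilon)=PE_{11}P^{-1}$ and $y=\psi(x_e)=PAP^{-1}$. Setting $P_i:=y^{i}PE_{11}P^{-1}y^{-i}=PE_{i+1,i+1}P^{-1}$ for $0\le i\le n-1$, these are exactly the diagonal idempotents conjugated into place, and they form the complete orthogonal set established in Theorem~\ref{idempotents}.

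\emph{Next I would} verify the matrix-unit relations in $M_n(F)$. Under $\psi$, the element $E_{xx'}=x\widehat{T_1}\varepsilon x'^{-1}$ with $x=x_e^{i}$ and $x'=x_e^{j}$ maps to $y^{i}\,\psi(\widehat{T_1}\varepsilon)\,y^{-j}=P E_{11} (P^{-1}y^{-j})$; using $y=PAP^{-1}$ and $A^{k}E_{11}A^{-k}=E_{k+1,k+1}$ together with $E_{11}A^{-1}=E_{1n}$-type identities, one computes $y^{i}PE_{11}P^{-1}y^{-j}=P E_{i+1,j+1}P^{-1}$. The family $\{P E_{ab}P^{-1}\}$ is simply the standard matrix-unit system $\{E_{ab}\}$ conjugated by the fixed invertible matrix $P$, so it satisfies $(PE_{ab}P^{-1})(PE_{cd}P^{-1})=\delta_{bc}\,PE_{ad}P^{-1}$ and $\sum_a PE_{aa}P^{-1}=I$ automatically. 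Transporting back through $\psi^{-1}$ gives $E_{xy}E_{zw}=\delta_{yz}E_{xw}$ and $\varepsilon=\sum_{x}E_{xx}$ in $\Q G\varepsilon$.

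To handle the general $T_2$-indices one combines the two layers: for $x=t\,x_e^{i}$ and $x'=t'\,x_e^{j}$ with $t,t'\in T_2$, the factor $\widehat{T_1}\varepsilon$ lies in $\Q N\varepsilon$, so $E_{xx'}=t\,(x_e^{i}\widehat{T_1}\varepsilon\, x_e^{-j})\,t'^{-1}$, and orthogonality of the $\varepsilon^t$ forces $E_{xy}E_{zw}$ to vanish unless the $T_2$-components match, reducing the product to the single-component relation already proved. Summing over all $x\in T_2\GEN{x_e}$ recovers $\sum_{t}\varepsilon^{t}=e$. For the last assertion, $E_{xx}$ is a primitive idempotent in the simple algebra $\Q G e$, so $E_{xx}\Q G E_{xx}$ is a division algebra; but $\Q H\varepsilon$ is commutative and $\psi$ identifies the corner $E_{ii}M_n(F)E_{ii}$ with $F$, giving $E_{xx}\Q G E_{xx}\cong F$.

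\emph{The main obstacle} will be the bookkeeping in the middle step: verifying cleanly that $y^{i}PE_{11}P^{-1}y^{-j}=PE_{i+1,j+1}P^{-1}$, since the cyclic shift $A$ interacts with $P$ and the exponents must be tracked modulo $n$ so that $\GEN{x_e}\cong\GEN{y}$ has order exactly $n$ and indexes the rows and columns bijectively. Everything else is a formal transport of the standard matrix-unit identities through the fixed conjugation by $P$ and the algebra isomorphism $\psi$.
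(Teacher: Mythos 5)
Your proposal is correct and follows exactly the route the paper intends: the paper's own proof is the one-liner ``this follows at once from Theorem~\ref{idempotents} and $\Q Ge\simeq M_{[G:H]}(F)$'', and you have simply filled in the details (the computation $\psi(x_e^i\widehat{T_1}\varepsilon x_e^{-j})=PA^iE_{11}A^{-j}P^{-1}=PE_{i+1,j+1}P^{-1}$, the reduction to $N_G(K)=G$ via orthogonality of the $\varepsilon^t$, and the corner-ring identification $E_{ii}M_n(F)E_{ii}\simeq F$). The only cosmetic slip is the aside ``$E_{11}A^{-1}=E_{1n}$-type identities'' --- with the paper's $A$ one has $Ae_j=e_{j+1}$ and hence $E_{11}A^{-1}=E_{12}$ --- but this does not affect your final, correct formula $A^iE_{11}A^{-j}=E_{i+1,j+1}$ (indices mod $n$).
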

\begin{proof}
This follows at once from Theorem~\ref{idempotents} and the fact that $\Q Ge\simeq M_{[G:H]}(F)$.
\end{proof}

In order to obtain an internal description within the group algebra $\Q G$, one would like to write the element $x_e=\psi\inv(PAP^{-1})$ of Theorem~\ref{idempotents} in terms of group elements of $\Q G$.  It might be a hard problem to obtain a generic formula. One of the reasons being that we first need to describe a normal basis of $\Q(\zeta_{[H:K]})/\Q(\zeta_{[H:K]})^{N/H}$. In general this is difficult to do. However, one can find some partial results in the literature. For example Hachenberger \cite{2000Hachenberger} studied normal bases for cyclotomic fields $\Q(\zeta_{q^m})$ with $q$ an odd prime. Once this obstacle is overcome one can determine $x_e$ as follows. Denote by $\Delta:\C N\rightarrow \C$ the trace map $\sum_{g\in N}a_gg \mapsto a_1$. It is easy to see and well-known that $\Delta(\alpha)=\frac{1}{|N|}\chi_{\mbox{\small reg}}(\alpha)=\frac{1}{|N|}\sum_{\chi\in\Irr(N)}\chi(1)\chi(\alpha)$, where we denote by $\chi_{\mbox{\small reg}}$ the regular character of $N$ and by $\Irr(N)$ the set of irreducible complex characters of $N$. It follows that $x_e = \sum_{g \in N}  \Delta(x_e g\inv) g = \frac{1}{|N|} \sum_{g\in N} \sum_{\chi \in \Irr(N)} \chi(1) \chi(x_e g\inv )g$.
Because $\psi$ can be seen as the representation induced to $N$ by a linear character $H$ with kernel $K$, $\psi$ is an irreducible complex representation of $N$.
As we know that $x_e$ belongs to a simple component of $\Q N$, namely the only one on which $\psi$ does not vanish, and the primitive central idempotent of $\Q N$ of such component is the sum of the primitive central idempotents of $\C N$ associated to the irreducible characters of the form $\sigma \circ T\circ \psi$, with $\sigma\in \Gal(F/\Q)$, we deduce that $\chi(x_e g\inv)$ vanishes in all the irreducible characters
different from $\sigma \circ T \circ \psi$ with $\sigma \in \Gal(F/\Q)$, where $T$ is the map associating a matrix with its trace.
Thus $x_e = \frac{1}{|N|} \sum_{g\in N} \sum_{\sigma\in \Gal(F/\Q)} (\sigma \circ T \circ \psi)(1) (\sigma \circ T \circ \psi)(x_e g\inv )g = \frac{1}{|H|} \sum_{g\in N}  \tr_{F/\Q} (T (PAP^{-1} \psi(g\inv ))) g$. In case $G$ is a finite nilpotent group then these problems can be overcome using the structure of the group; and so one can deal with arbitrary finite nilpotent groups, even in the case when $\tau$ is not trivial.

 We now give an example, of a finite metacyclic group, to show that sometimes these problems also can be overcome using only basic linear algebra.
\begin{example}
Let $\MC=C_7\rtimes C_3=\GEN{a,b\mid a^7=1=b^3,\ a^b=a^2}$. Consider the primitive central idempotent $e=e(\MC,\GEN{a},1)=\varepsilon(\GEN{a},1)$ and the non-commutative simple component $\Q \MC e=\Q\GEN{a}e*\GEN{b}$ with trivial twisting. Consider the algebra isomorphism $\psi:\Q\GEN{a}e*\GEN{b} \simeq M_3(\Q(ae+a^2e+a^4e))$ with respect to $B=\{ae,a^2e,a^4e\}$, a normal basis of $\Q (ae)$ over $\Q(ae+a^2e+a^4e)$. Now we have $A=\psi(be)$ and in order to describe $x_e=\psi^{-1}(PAP^{-1})$ in terms of elements of $\Q \MC$, it is sufficient to write $\psi^{-1}(P)$ in terms of group ring elements. Write $\psi^{-1}(P)=\alpha_0+\alpha_1b+\alpha_2b^2$ with $\alpha_i\in \Q\GEN{a}e$ and solve the system
$$\left\{ \begin{array}{lcl}
(\alpha_0'+\alpha_1'\circ b+\alpha_2 '\circ b^2)(ae) &=& (a+a^2+a^4)e\\
(\alpha_0'+\alpha_1'\circ b+\alpha_2 '\circ b^2)(a^2e) &=& (a-a^2)e\\
(\alpha_0'+\alpha_1'\circ b+\alpha_2 '\circ b^2)(a^4e) &=& (a-a^4)e.
 \end{array}\right.$$
This can be done by writing each $\alpha_i=(x_{i,0}+x_{i,1}a+x_{i,2}a^2+x_{i,3}a^3+x_{i,4}a^4+x_{i,5}a^5)e$ with $x_{i,j}\in \Q$ and using the equality $(1+a+a^2+a^3+a^4+a^5+a^6)e=0$. This leads to a system of 18 linear equations in 18 variables. It can be verified that
\begin{eqnarray*}
\psi^{-1}(P) = && \left( -\frac{4}{7}-\frac{1}{14}a-\frac{1}{2}a^2-\frac{5}{14}a^3-\frac{1}{7}a^4-\frac{5}{14}a^5 \right)e \\
&+& \left( \frac{2}{7}-\frac{11}{14}a-\frac{3}{14}a^2-\frac{1}{2}a^3-\frac{1}{7}a^4-\frac{9}{14}a^5 \right)be\\
&+&  \left( \frac{2}{7}-\frac{9}{14}a-\frac{11}{14}a^2-\frac{9}{14}a^3+\frac{2}{7}a^4-\frac{1}{2}a^5 \right)b^2e .
\end{eqnarray*}
\end{example}

However, it is crucial that the twistings appearing in the simple components are trivial in order to make use of Theorem~\ref{reiner}. The following example shows that our methods can not be extended to, for example, $C_q\rtimes C_{p^2}$ with non-faithful action.
\begin{example}
Consider the group $\MC=C_{19}\rtimes C_9=\langle a,b\mid a^{19}=b^9=1,\ a^b=a^7\rangle$ and the strong Shoda pair $(\GEN{a,b^3},1)$. Let $\varepsilon=\varepsilon(\GEN{ab^3},1)$. The elements $1,b,b^2$ are coset representatives for $\GEN{ab^3}=\GEN{a,b^3}$ in $\MC$. Since $b^2\GEN{a,b^3}b^2\GEN{a,b^3}=b\GEN{a,b^3}$ and $b^3=(ab^3)^{19}$, we get that $\tau_{b^2\GEN{a,b^3},b^2\GEN{a,b^3}}=\zeta_{57}^{19}\neq 1$. Hence the twisting is not trivial, although it is cohomologically trivial.
\end{example}

This method yields a detailed description of a complete set of orthogonal primitive idempotents of $\Q G$ when $G$ is a strongly monomial group such that there exists a complete and non-redundant set of strong Shoda pairs $(H,K)$ satisfying $\tau(nH,n'H)=1$ for all $n,n'\in N_G(K)$. As we will show in the next section on metacyclic groups, the groups of the form $C_{q^m}\rtimes C_{p^n}$ with $C_{p^n}$ acting faithfully on $C_{q^m}$ do satisfy this condition on the strong Shoda pairs. However not all groups satisfying this condition on the twistings are metacyclic, for example the symmetric group $S_4$ and the alternating group $A_4$ of degree 4 have a trivial twisting in all Wedderburn components of their rational group rings and are not metacyclic (and not nilpotent). Trivially all abelian groups are included and it is also easy to prove that for all dihedral groups $D_{2n}=\GEN{a,b\mid a^n=b^2=1,\ a^b=a^{-1}}$ there exists a complete and non-redundant set of strong Shoda pairs with trivial twisting since the group action involved has order 2 and hence is faithful. On the other hand for quaternion groups $Q_{4n}=\GEN{x,y \mid x^{2n} = y^4 = 1,\ x^n = y^2,\ x^y = x^{-1}}$, one can verify that the strong Shoda pair $(\GEN{x},1)$ yields a non-trivial twisting.

\section{Application: Generators of a subgroup of finite index in $\U(\Z C_{q^m}\rtimes C_{p^n})$}\label{SectionMetacyclic}

In this section, we first describe the Wedderburn decomposition of $\Q G$, for the metacyclic groups of the form $C_{q^m}\rtimes C_{p^n}$ with $C_{p^n}$ acting faithfully on $C_{q^m}$ and $p$ and $q$ different primes. Next we construct a virtual basis of the group $\mathcal{Z}(\U(\Z \MC))$. This generalizes results from \cite{FerrazSimon2008} where the case $m=n=1$ is handled. As an application we describe finitely many generators of three nilpotent subgroups of $\U(\Z (C_{q^m}\rtimes C_{p^n}))$ that together generate a subgroup of finite index. If $p=q$ then such a result was obtained in \cite{Jespers2010}, even for arbitrary finite nilpotent groups.

Throughout this section $p$ and $q$ are different primes, $m$ and $n$ are positive integers and $G=\GEN{a}\rtimes \GEN{b}$ with $|a|=q^m$, $|b|=p^n$ and $\GEN{b}$ acts faithfully on $\GEN{a}$ (i.e. $C_{\GEN{b}}(a)=1$). Let $\sigma$ be the automorphism of $\GEN{a}$ given by $\sigma(a)=a^b$ and assume that $\sigma(a)=a^r$ with $r\in \Z$. As the kernel of the restriction map $\Aut(\GEN{a})\rightarrow \Aut\left(\GEN{a^{q^{m-1}}}\right)$ has order $q^{m-1}$ it intersects $\GEN{\sigma}$ trivially and therefore the restriction of $\sigma$ to $\GEN{a^{q^{m-1}}}$ also has order $p^n$. This implies that $q\equiv 1 \mod p^n$ and thus $q$ is odd. Therefore, $\Aut\left(\GEN{a^{q^j}}\right)(=\Gal(\Q(\zeta_{q^j})/\Q)=\U(\Z/q^j\Z))$ is cyclic for every $j=0,1,\dots,m$ and $\GEN{\sigma}$ is the unique subgroup of $\Aut(\GEN{a})$ of order $p^n$. So, for every $i=1,\dots,m$, the image of $r$ in $\Z/q^i \Z$ generates the unique subgroup of $\U(\Z/q^i \Z)$ of order $p^n$. In particular $r^{p^n}\equiv 1 \mod q^m$ and $r^{p^j}\not\equiv 1 \mod q$ for every $j=0,\ldots,n-1$. Therefore, $r\not\equiv 1 \mod q$ and hence $\MC'=\GEN{a^{r-1}}=\GEN{a}$. Using the description of strong Shoda pairs of $\MC$ given in Theorem~\ref{SSPmetabelian}, we get a complete and non-redundant set of strong Shoda pairs of $G$ consisting of two types:
\begin{enumerate}[label=\rm(\roman{*}), ref=\roman{*}]
\item \label{SP1} $\left(\MC,L_i:=\GEN{a,b^{p^i}}\right), \; i=0,\dots,n$,\\
\item \label{SP2} $\left(\GEN{a},K_j:=\GEN{a^{q^j}}\right), \; j=1,\dots,m$.
\end{enumerate}
Using the description of the associated simple algebra given in Proposition~\ref{SSP}, we obtain the following description of the simple components of $\Q G$:
\begin{enumerate}[label=\rm(\Roman{*}), ref=\Roman{*}]
\item \label{SC1} $\Q \MC\varepsilon\left(\MC, L_i\right)\simeq \Q\left( \zeta_{p^i} \right), \; i=0,\dots,n$,\\
\item \label{SC2} $\Q \MC\varepsilon\left(\GEN{a},K_j\right) \simeq \Q\left(\zeta_{q^j}\right)*C_{p^n}, \; j=1,\dots,m$.
\end{enumerate}

It is easy to verify that the twisting of the crossed product in (\ref{SC2}) is trivial and hence, by Theorem~\ref{reiner}, this simple component is isomorphic to $$M_{p^n}\left({\left(\Q\GEN{a}\varepsilon\left(\GEN{a},K_j\right)\right)}^{\GEN{b}}\right)  \simeq M_{p^n}\left( F_j \right),$$ where $F_j=\Q\left(\zeta_{q^j}\right)^{C_{p^n}}$, the fixed field of $\Q\left(\zeta_{q^j}\right)$ by the action of $C_{p^n}$. Furthermore $F_j$ is the unique subfield of index $p^n$ in $\Q\left(\zeta_{q^j}\right)$.

We first compute the rank of $\mathcal{Z}(\U(\Z \MC))$ using the formula from Theorem~\ref{rank} and the description of the strong Shoda pairs (\ref{SP1}, \ref{SP2}). When $p$ is odd, an easy computation shows that the rank equals $$\sum_{i=1}^{n} \left( \frac{p^{i-1}(p-1)}{2}-1 \right) + \sum_{j=1}^m \left(\frac{q^{j-1}(q-1)}{2p^n}-1\right) =\frac{p^n-1}{2}+\frac{q^m-1}{2p^n}-n-m,$$ because $r$ has odd order modulo $q^m$.

When $p=2$, the rank equals $$\sum_{i=2}^{n} \left( 2^{i-2}-1 \right) + \sum_{j=1}^m \left(\frac{q^{j-1}(q-1)}{2^n}-1\right) = 2^{n-1}+\frac{q^m-1}{2^n}-n-m,$$ since $a^{b^{2^{n-1}}}=a\inv$ because $r$ has even order modulo $q^m$.

Now we use Theorem~\ref{basis} to obtain a virtual basis of $\mathcal{Z}(\U(\Z G))$:
\begin{eqnarray*}&&\left\{ c_0^{i}(G,L_i,k,1) \mid i=1,\dots,n,\ 1<k<\frac{p^i}{2},\ p\nmid k\right\}\\ &\bigcup&
\left\{ \prod_{x=0}^{p^n-1} c_0^{j}(\GEN{a},K_j,k,r^x) \mid j=1,\dots,m,\ k\in I_j\setminus \{1\} \right\}\end{eqnarray*}
where $I_j$ is a set of coset representatives of $\U(\Z/q^j\Z)$ modulo $\GEN{r,-1}$ containing $1$. We claim that the units $c_0^{j}(\GEN{a},K_j,k,r^x)$, which are products of generalized Bass units, can be replaced by $c_{m-j}^{m}(\GEN{a},1,k,r^x)$, which are products of Bass units. Indeed, these units project trivially on the commutative algebra $\Q G\suma{a}$. Moreover, by Proposition~\ref{projections}, they project to the unit $$\pi(\eta_k(\zeta_{q^j}))^{O_{q^m}(k)q^{m-1}}$$ in the simple component $\Q G \varepsilon(\GEN{a},K_j) \cong \Q ( \zeta_{q^j})$ and trivially in all other components of $\Q G (1-\widehat{a})$. By Lemma~\ref{norm} the set $$\{\pi\left(\eta_k(\zeta_{q^j})\right)\mid k\in I_j\setminus\{1\}\}$$ is a virtual basis of $\U\left(\Z[\zeta_{q^j}]^{C_{p^n}}\right)$. This proves the claim.

We summarize these results in the following theorem.

\begin{theorem}\label{basisMC}
Let $\MC=C_{q^m}\rtimes C_{p^n}$ be a finite metacyclic group with $C_{p^n}=\GEN{b}$ acting faithfully on $C_{q^m}=\GEN{a}$ and with $p$ and $q$ different primes. Let $r$ be such that $a^b=a^r$.
For each $j=1,\dots,m$, let $I_j$ be a set of coset representatives of $\U(\Z/{q^j\Z})$ modulo $\GEN{r,-1}$.

If $p=2$, then
\begin{eqnarray*}
U&=&\left\{c_{0}^i(\MC,\GEN{a,b^{2^i}},k,1)\mid 1<k< 2^{i-1},\ 2\nmid k,\ i=2,\ldots,n\right\}\\
&\bigcup& \left\{c_{m-j}^m(\GEN{a},1,k,1)c_{m-j}^m(\GEN{a},1,k,r)\cdots c_{m-j}^m(\GEN{a},1,k,r^{2^n-1})\mid k\in I_j\setminus\{1\},\ j=1,\dots,m\right\}
\end{eqnarray*}
is a virtual basis of $\mathcal{Z}(\U(\Z \MC))$, consisting of $2^{n-1}+\frac{q^m-1}{2^n}-n-m$ units.

If $p$ is odd, then
\begin{eqnarray*}
U&=&\left\{c_{0}^i(\MC,\GEN{a,b^{p^i}},k,1)\mid 1<k< \frac{p^i}{2},\ p\nmid k,\ i=1,\ldots,n\right\}\\
&\bigcup& \left\{c_{m-j}^{m}(\GEN{a},1,k,1)c_{m-j}^{m}(\GEN{a},1,k,r)\cdots c_{m-j}^{m}(\GEN{a},1,k,r^{p^n-1})\mid k\in I_j\setminus\{1\},\ j=1,\dots,m\right\}
\end{eqnarray*}
is a virtual basis of $\mathcal{Z}(\U(\Z \MC))$, consisting of $\frac{p^n-1}{2}+\frac{q^m-1}{2p^n}-n-m$ units.
\end{theorem}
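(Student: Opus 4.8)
The plan is to read Theorem~\ref{basisMC} as the specialization of the general virtual-basis result Theorem~\ref{basis} to the explicit strong Shoda pairs of $\MC$, combined with the rank count of Theorem~\ref{rank}; the only genuinely new ingredient is the passage from generalized Bass units to honest Bass units in the components of type~(\ref{SC2}). First I would confirm that $\mathcal{S}=\{(\MC,L_i):0\le i\le n\}\cup\{(\GEN{a},K_j):1\le j\le m\}$ is a complete and non-redundant set of strong Shoda pairs with each $[H:K]$ a prime power, which is exactly the content of the preceding discussion: for type~(\ref{SP1}) one has $[\MC:L_i]=p^i$ and for type~(\ref{SP2}) one has $[\GEN{a}:K_j]=q^j$. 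Thus Theorem~\ref{basis} applies verbatim, and the task reduces to reading off, for each pair, the data $N_G(K)$, the transversal $T_K$ and the group $N_G(K)/H$ that enter the generic basis element $\prod_{t\in T_K}\prod_{x\in N_G(K)/H}c_0^{n_{(H,K)}}(H,K,k,x)^t$.

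Next I would compute these data. For $(\MC,L_i)$ the subgroup $L_i$ is normal and $H=\MC$, so $N_G(L_i)=\MC$, the transversal is trivial and $N_G(L_i)/H$ is trivial; the generic element collapses to $c_0^i(\MC,L_i,k,1)$, with the index set a transversal of $\U(\Z/p^i\Z)$ modulo $\GEN{-1}$. For $(\GEN{a},K_j)$ the subgroup $K_j=\GEN{a^{q^j}}$ is characteristic in the normal subgroup $\GEN{a}$, hence normal in $\MC$, so again $N_G(K_j)=\MC$ with trivial transversal; here $H=\GEN{a}$ and, using faithfulness, $N_G(K_j)/H\cong\GEN{b}\cong\GEN{\phi_r}$ is \emph{cyclic} of order $p^n$ inside $\U(\Z/q^j\Z)$. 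Hence the generic element becomes $\prod_{x=0}^{p^n-1}c_0^{j}(\GEN{a},K_j,k,r^x)$, with index set $I_j$ a transversal of $\U(\Z/q^j\Z)$ modulo $\GEN{r,-1}$, placing us in the cyclic-$N/H$ case of the proof of Theorem~\ref{basis}.

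The substantive step is the replacement of each $\prod_x c_0^j(\GEN{a},K_j,k,r^x)$ by $\prod_x c_{m-j}^m(\GEN{a},1,k,r^x)$, turning generalized Bass units into genuine Bass units of $\Z\GEN{a}\subseteq\Z\MC$. For this I would compare the two families through Proposition~\ref{projections}. Writing $H_{j_1}=\GEN{a^{q^{m-j_1}}}$, the unit $c_{m-j}^m(\GEN{a},1,k,r^x)$ projects under $\rho_{H_{j_1}}$ to a fixed power of $\eta_k(\zeta_{q^j}^{r^x})$ exactly when $j_1=m-j$, i.e. when $H_{j_1}=K_j$, and trivially otherwise; in particular it projects trivially onto $\Q\MC\suma{a}$. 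This is precisely the projection behaviour of $c_0^j(\GEN{a},K_j,k,r^x)$ in the component $\Q\MC\varepsilon(\GEN{a},K_j)\cong\Q(\zeta_{q^j})$. By Lemma~\ref{norm} both products therefore map, up to a common power and torsion, onto a virtual basis of $\U(\Z[\zeta_{q^j}]^{C_{p^n}})$ via the norm $\pi$, and trivially on all other components of $\Q\MC(1-\suma{a})$. Since passing to a finite-index subgroup preserves the virtual-basis property, the replacement is legitimate.

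Finally I would make the indexing explicit by splitting on the parity of $p$ and matching against Theorem~\ref{rank}. The fields $\Q(\zeta_{p^i})$ in type~(\ref{SP1}) are totally complex for $p^i\ge 3$, contributing $\varphi(p^i)/2-1$ each. When $p$ is odd, $\GEN{r}$ has odd order so $-1\notin\GEN{r}$, forcing $k_{(\GEN{a},K_j)}=2$ in type~(\ref{SP2}); the surviving representatives are $\{k:1<k<p^i/2,\ p\nmid k\}$ and $I_j\setminus\{1\}$, giving $\frac{p^n-1}{2}+\frac{q^m-1}{2p^n}-n-m$ units. When $p=2$, the element $b^{2^{n-1}}$ inverts $a$, so $-1\in\GEN{r}$ and $k_{(\GEN{a},K_j)}=1$; moreover $\Q(\zeta_{2^i})=\Q$ for $i=0,1$ kills those components, shifting the range to $1<k<2^{i-1}$ with $i\ge 2$, giving $2^{n-1}+\frac{q^m-1}{2^n}-n-m$ units. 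Matching these counts with the ranks from Theorem~\ref{rank} confirms the stated sets are virtual bases. I expect the main obstacle to be the bookkeeping in the Bass-unit replacement: one must track the two different top indices ($j$ versus $m$) in Proposition~\ref{projections} and verify that the shift $c_0^j(\GEN{a},K_j,\cdot)\leftrightarrow c_{m-j}^m(\GEN{a},1,\cdot)$ aligns the surviving projection at $H_{m-j}=K_j$ while all other projections, including that onto $\Q\MC\suma{a}$, vanish.
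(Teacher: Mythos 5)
Your proposal is correct and follows essentially the same route as the paper: identify the two families of strong Shoda pairs, specialize Theorem~\ref{basis} to them (noting all the $K$'s are normal so the transversals are trivial and $N_G(K)/H$ is trivial resp.\ $\GEN{\phi_r}$), replace the generalized Bass units $c_0^{j}(\GEN{a},K_j,k,r^x)$ by the Bass units $c_{m-j}^{m}(\GEN{a},1,k,r^x)$ via the projection formula of Proposition~\ref{projections} together with Lemma~\ref{norm}, and match the cardinalities against the rank formula of Theorem~\ref{rank}, splitting on the parity of $p$ exactly as the paper does. No gaps.
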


As an example we will apply our result to the metacyclic group $C_q\rtimes C_p$ to deduce a result of \cite{FerrazSimon2008}.
\begin{example}
Let $G=C_q\rtimes C_p=\GEN{a,b \mid a^q=1=b^p,\ a^b=a^r}$ be a metacyclic group of order $pq$, for $p$ and $q$ different odd primes. Let $I$ be a set of coset representatives of $\U(\Z/q\Z)$ modulo $\GEN{r,-1}$. Then
\begin{eqnarray*}
&&\left\{ c_0^1(G,\GEN{a},k,1)  \mid 1<k< \frac{p}{2} \right\}\\ &&\bigcup \left\{ c_0^1(\GEN{a},1,k,1)c_0^1(\GEN{a},1,k,r)\cdots c_0^1(\GEN{a},1,k,r^{p-1}) \mid k\in
I\setminus\{1\}\right\} \\
&=& \left\{ \Bass{k}{O_{p}(k)n_{G,\GEN{a}}}{1-\suma{a}+b\suma{a}}^q  \mid 1<k< \frac{p}{2} \right\} \\ &&\bigcup \left\{ \Bass{k}{O_{q}(k)}{a}\Bass{k}{O_{q}(k)}{a^r}\cdots \Bass{k}{O_{q}(k)}{a^{r^{p-1}}}) \mid k\in
I\setminus\{1\}\right\}
\end{eqnarray*}
is a virtual basis of $\mathcal{Z}(\U(\Z G))$, consisting of $$\frac{p-1}{2} + \frac{(q-1)}{2p}-2$$ units.
\end{example}

We will now describe a complete set of orthogonal primitive idempotents in each simple component of $\Q \MC$.

The non-commutative simple components of $\Q \MC$ are $\Q G\varepsilon(\GEN{a},K_j)\cong M_{p^n}(F_j)$ for $j=1,\dots,m$. Fix a normal element $w_j$ of $\Q(\zeta_{q^j})/F_j$ and let $B_j$ be the normal basis determined by $w_j$. Let $\psi_j:\Q G \varepsilon(\GEN{a},K_j) \rightarrow M_{p^n}(F_j)$ be the isomorphism given by Theorem~\ref{reiner} with respect to $B_j$. Then $\psi_j(b\varepsilon(\GEN{a},K_j))$ is the permutation matrix $A$ of Theorem~\ref{idempotents} and $\GEN{b}$ is a transversal of $\GEN{a}$ in $G$. By Corollary~\ref{matrix units} we know that
$$\{ x_j^h \widehat{b} x_j^{-k} : 1\le h,k \le p^n\}$$ is a complete set of matrix units of $\Q G\varepsilon(\GEN{a},K_j)$, where $x_j=\psi_j\inv(P)b\varepsilon(\GEN{a},K_j)\psi_j\inv(P)\inv$.

As application of the description of the matrix units in each simple component $\Q \MC e$ and of the description of the central units in $\Z \MC$, we construct explicitly generators for three nilpotent subgroups of $\U(\Z G)$ that together generate a subgroup of finite index.

Let $\O$ be an order in a division algebra $D$ and denote by $\GL_{n}(\O)$ the group of invertible matrices in $M_{n}(\O)$ and by $\SL_{n}(\O)$ its subgroup consisting of matrices of reduced norm $1$. For an ideal $Q$ of $\O$ we denote by $E(Q)$ the subgroup of $\SL_{n}(\O)$ generated by all $Q$-elementary matrices, that is $E(Q)=\langle I+ qE_{ij} \mid  q\in Q,\ 1\leq i,j\leq n,\ i\neq j,\ E_{ij} \textrm{ a matrix unit} \rangle$. We summarize the following theorems \cite[Theorem 21.1, Corollary 21.4]{Bass1964}, \cite[Theorem 2.4, Lemma 2.6]{Vaserstein1973}, \cite[Theorem 24]{Liehl1981} and \cite[Theorem]{Venkataramana1994}.

\begin{theorem}[Bass-Vaser{\v{s}}te{\u\i}n-Liehl-Venkataramana] \label{elementary}
If $n\geq 3$ then $[\SL_{n}(\mathcal{O}):E(Q)]<\infty$. If $\U(\O)$ is infinite then $[\SL_{2}(\mathcal{O}):E(Q)]<\infty$.
\end{theorem}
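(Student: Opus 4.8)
The statement to be proved is Theorem~\ref{elementary}, which is explicitly labelled as a combination of results of Bass, Vaser\v{s}te\u\i n, Liehl and Venkataramana. Since this is a summary citation rather than an original result, my plan is not to reprove these deep theorems from scratch but to assemble the cited statements into the unified form needed later. Let me figure out what the actual content is and how one would justify it.

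The statement has two parts: (1) for $n \geq 3$, the elementary subgroup $E(Q)$ has finite index in $\SL_n(\mathcal{O})$; (2) for $n = 2$, the same holds provided $\mathcal{U}(\mathcal{O})$ is infinite.

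For part (1), $n\geq 3$, this is classical stable-range / congruence-subgroup theory. The plan is to invoke Bass's stability results together with the solution of the congruence subgroup problem for $\SL_n$ over orders in division algebras. The key point is that $\mathcal{O}$, being an order in a finite-dimensional $\mathbb{Q}$-division algebra, has stable rank at most $2$ (indeed, it is a $\mathbb{Z}$-order, hence its stable rank is bounded using that $\mathcal{O}/Q$ is finite for every nonzero ideal $Q$). Bass's theorems then give that $E(Q)$ is normal in $\SL_n(\mathcal{O})$ and that $\SL_n(\mathcal{O})/E(Q)$ injects into a relative $K_1$-group which is finite here because $\mathcal{O}/Q$ is a finite ring. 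So part (1) should be essentially a direct citation to [Bass1964, Theorem 21.1, Corollary 21.4] combined with Vaser\v{s}te\u\i n [Vaserstein1973].

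For part (2), $n=2$, the situation is genuinely harder and is precisely where the work of Liehl and Venkataramana enters. Here finiteness of $[\SL_2(\mathcal{O}):E(Q)]$ can fail in general (e.g. for imaginary quadratic fields the group is not even finitely generated by elementary matrices), which is exactly why the hypothesis that $\mathcal{U}(\mathcal{O})$ is infinite is imposed. By Dirichlet's unit theorem applied to the center, the condition that $\mathcal{U}(\mathcal{O})$ is infinite rules out the exceptional cases. The plan for this part is to cite Liehl [Liehl1981, Theorem 24] and Venkataramana [Venkataramana1994, Theorem], which establish exactly that under an infinite-unit (equivalently, positive rank) hypothesis the elementary subgroup $E(Q)$ has finite index in $\SL_2(\mathcal{O})$.

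The main obstacle, and the only substantive point to verify, is that the hypotheses of the cited $n=2$ theorems are met in our setting: one must confirm that ``$\mathcal{U}(\mathcal{O})$ infinite'' is the correct translation of the rank/unit conditions appearing in Liehl's and Venkataramana's formulations, and that our order $\mathcal{O}$ in the division algebra $D$ falls within the class of rings they treat. I would therefore spend the proof verifying that $D$ is a finite-dimensional $\mathbb{Q}$-division algebra whose order $\mathcal{O}$ has the required properties, and that the infinitude of $\mathcal{U}(\mathcal{O})$ guarantees the absence of the exceptional totally-definite cases excluded in those references. Once this dictionary is in place, the theorem follows by direct citation, so I would present the proof as a short assembly argument rather than an independent derivation.
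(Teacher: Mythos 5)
Your proposal matches the paper exactly: the paper gives no independent proof of this theorem but simply states it as a summary of the cited results of Bass, Vaser\v{s}te\u\i n, Liehl and Venkataramana, which is precisely the assembly-by-citation you describe. Your additional remarks on verifying that the order $\mathcal{O}$ and the infinite-unit hypothesis fit the cited formulations are sensible but go beyond what the paper itself records.
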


In order to state the next theorem, it is convenient to introduce the notation of class sum. Let $G$ be a finite group, $X$ a normal subgroup and $Y$ a subgroup such that $Y$ acts faithfully on $X$ by conjugation. Consider the orbit $x^Y$ of an element $x\in X$, then we will call $\widetilde{x^Y}=\sum_{y\in Y} x^y\in \Z X$ the orbit sum of $x$. By $\widetilde{X^Y}$ we will denote the set of all different orbit sums $\widetilde{x^Y}$ for $x\in X$.

\begin{theorem}\label{units pq}
Let $\MC=C_{q^m}\rtimes C_{p^n}$ be a finite metacyclic group with $C_{p^n}=\GEN{b}$ acting faithfully on $C_{q^m}=\GEN{a}$ and with $p$ and $q$ different primes. Assume that either $q\neq 3$, or $n\ne 1$ or $p\ne 2$. For every $j=1,\dots,m$ let $K_j$ and $x_j$ be as above and let $t_j$ be a positive integer such that $t_jx_j^{k}\in \Z \MC$ for all $k$ with $1\leq k\leq p^n$. Then the following two groups are finitely generated nilpotent subgroups of $\mathcal{U}(\Z \MC)$:
$$V_j^+=\GEN{ 1+p^nt_j^2yx_j^{h}\widehat{b}x_j^{-k} \mid y\in \widetilde{\GEN{a}^{\GEN{b}}} ,\ h,k\in\{1,\ldots,p^n\},\ h<k} ,$$
$$V_j^-=\GEN{ 1+p^nt_j^2yx_j^{h}\widehat{b}x_j^{-k} \mid y\in \widetilde{\GEN{a}^{\GEN{b}}} ,\ h,k\in\{1,\ldots,p^n\},\ h>k}.$$
Hence $V^+=\prod_{j=1}^m V_j^+$ and $V^-=\prod_{j=1}^m V_j^-$ are nilpotent subgroups of $\U(\Z \MC)$. Furthermore, the group $$\left \langle U, V^+, V^-\right\rangle,$$ with $U$ as in Theorem~\ref{basisMC}, is of finite index in $\mathcal{U}(\Z \MC)$.
\end{theorem}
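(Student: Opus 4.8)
The plan is to work through the Wedderburn decomposition
$$\Q G=\bigoplus_{i=0}^{n}\Q(\zeta_{p^i})\ \oplus\ \bigoplus_{j=1}^{m}M_{p^n}(F_j)$$
established above, treating the simple components one at a time, and to use throughout that $\mathcal{U}(\Z G)$ is commensurable with the product $\prod_i\mathcal{U}(\mathcal{O}_i)$ of the unit groups of orders $\mathcal{O}_i$ in the simple components $A_i$. The commutative components of type (\ref{SC1}), and the centers $F_j$ of the components of type (\ref{SC2}), will be covered by the central units $U$; the noncommutative part of each $M_{p^n}(F_j)$ will be covered by $V_j^+$ and $V_j^-$. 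The two halves are then recombined.

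First I would check that the listed elements are units of $\Z G$ and determine their nilpotency. By Corollary~\ref{matrix units} the element $x_j^h\widehat{b}x_j^{-k}$ is a matrix unit of $\Q G\varepsilon(\GEN{a},K_j)\cong M_{p^n}(F_j)$ that vanishes in every other component; for $y\in\widetilde{\GEN{a}^{\GEN{b}}}$ the product $yx_j^h\widehat{b}x_j^{-k}$ projects to $\overline{y}\,E_{hk}$ with $\overline{y}\in F_j$, and the factor $p^nt_j^2$ together with $t_jx_j^{\pm k}\in\Z G$ and $p^n\widehat{b}\in\Z G$ clears denominators. When $h\neq k$ this matrix unit is nilpotent, so $1+p^nt_j^2yx_j^h\widehat{b}x_j^{-k}$ is unipotent (inverse $1-p^nt_j^2yx_j^h\widehat{b}x_j^{-k}$), projecting to the elementary matrix $I+p^nt_j^2\overline{y}\,E_{hk}$ in component $j$ and to $1$ elsewhere. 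Conjugating the $\psi_j$-image by the matrix $P$ of Theorem~\ref{idempotents} turns the generators of $V_j^+$ into standard strictly upper triangular elementary matrices and those of $V_j^-$ into strictly lower triangular ones; since the projection to component $j$ is injective on $V_j^\pm$ (every element is trivial in all other components), $V_j^+$ and $V_j^-$ embed into the upper, respectively lower, unitriangular group of $M_{p^n}(F_j)$ and are therefore finitely generated and nilpotent of class at most $p^n-1$. As distinct $V_i^\pm$ are supported on distinct components they pairwise commute and intersect trivially, so $V^\pm=\prod_jV_j^\pm$ is an internal direct product and again nilpotent.

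The heart of the argument is finite index. Put $H_0=\GEN{V^+,V^-}=\prod_j\GEN{V_j^+,V_j^-}$. Up to $P$-conjugation, the $V_j^+$- and $V_j^-$-generators together realize all elementary matrices $I+q\,E_{st}$ with $s\neq t$ and $q$ ranging over the ideal $Q_j$ of the order $R_j\subseteq F_j$ spanned by the elements $p^nt_j^2\overline{y}$; hence $\pi_j(\GEN{V_j^+,V_j^-})$ contains the elementary subgroup $E(Q_j)$, which by Theorem~\ref{elementary} has finite index in $\SL_{p^n}(R_j)$. Here the hypothesis ``$q\neq 3$ or $n\neq 1$ or $p\neq 2$'' is exactly what guarantees that whenever $p^n=2$ the totally real field $F_j$ is different from $\Q$, so $\mathcal{U}(R_j)$ is infinite and the $2\times2$ clause of Theorem~\ref{elementary} applies. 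By the direct product structure, $H_0$ has finite index in $\prod_j\SL_{p^n}(R_j)$ (viewed trivially on the other components). On the other hand $U$ is a virtual basis of $\mathcal{Z}(\mathcal{U}(\Z G))$ by Theorem~\ref{basisMC}, and its image generates a finite-index subgroup of the abelian quotient $\prod_i\mathcal{U}(\mathcal{O}_i)/\prod_j\SL_{p^n}(R_j)$: the type (\ref{SC1}) factors are covered directly, while in each type (\ref{SC2}) factor the reduced norm of a central scalar unit $\lambda I$ is $\lambda^{p^n}$, which still generates a finite-index subgroup of $\mathcal{U}(R_j)$. Since a subgroup meeting a normal subgroup in finite index and having finite-index image modulo that normal subgroup is itself of finite index, $\GEN{U,V^+,V^-}$ has finite index in $\prod_i\mathcal{U}(\mathcal{O}_i)$, and hence, by commensurability, in $\mathcal{U}(\Z G)$.

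I expect the main obstacle to be the finite-index step for the noncommutative components: correctly pinning down the order $R_j$ and the ideal $Q_j$ so that the projected generators of $\GEN{V_j^+,V_j^-}$ genuinely exhaust $E(Q_j)$, and in particular the delicate case $p^n=2$, where Theorem~\ref{elementary} fails for $\SL_2$ over an order with finite unit group. It is precisely at this point that the arithmetic condition on $p$, $q$ and $n$ is consumed, ruling out the groups $C_{3^m}\rtimes C_2$ for which the component $M_2(\Q)$ would obstruct the conclusion.
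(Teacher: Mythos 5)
Your proposal is correct and follows essentially the same route as the paper's proof: project onto the components $M_{p^n}(F_j)$ via the matrix units of Corollary~\ref{matrix units}, identify the generators of $V_j^{\pm}$ with elementary matrices over the order $\Z\bigl[\widetilde{\GEN{a}^{\GEN{b}}}\bigr]\varepsilon$, invoke Theorem~\ref{elementary} (with the hypothesis on $p$, $q$, $n$ consumed exactly where you place it, to guarantee an infinite unit group when $p^n=2$), and combine with the central units $U$ of Theorem~\ref{basisMC} via the fact that the center of $\GL_{p^n}$ together with $\SL_{p^n}$ has finite index in $\GL_{p^n}$. The extra details you supply (explicit unipotent inverses, the reduced-norm computation $\lambda\mapsto\lambda^{p^n}$, the nilpotency class bound) are consistent with, and slightly more explicit than, the published argument.
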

\begin{proof}
Since the units of the commutative components are central, we only have to consider the non-commutative components $\Q \MC e$, for $j=1,\dots,m$, of type (\ref{SC2}) with $e=\varepsilon\left(\GEN{a},K_j\right)$. Whereas the center of $\Q \MC e$ coincides with the field of character values of the rational character afforded by the primitive central idempotent $e$ and as this character is induced from a linear character on $\GEN{a}$, it follows that $\Q \MC e \simeq M_{p^n}\left(\Q\left(\widetilde{\GEN{a}^{\GEN{b}}}\right)e\right)$. Let $\O=\Z[ae]^{\GEN{b}}=\Z\left[ \widetilde{\GEN{a}^{\GEN{b}}}\right] e$, which is as a $\Z$-module finitely generated by $\widetilde{\GEN{a}^{\GEN{b}}}e$. Clearly, the elements of the form  $(1-e)+(1+p^nt_j^2yx_j^{h}\widehat{b}x_j^{ k})$ are in $\Z \MC$ and project trivially to $\Q \MC(1-e)$ and by the comments given before the theorem the group $\langle V_j^+,V_j^-\rangle$, generated by these elements, projects to the group $$\langle I+z_jE_{hk}\mid z_j\in p^nt_j^2\O,\ 1\leq h,k\leq p^n,\ i\neq j, E_{hk} \textrm{ a matrix unit} \rangle$$ of elementary matrices of $M_{p^n}(\O)$.

If $p\neq 2$ or $n\neq 1$, then the conditions of Theorem~\ref{elementary} are clearly satisfied. Also if $p=2$, $n=1$ and $q\neq 3$, the conditions are satisfied since $\U(\O)$ is finite if and only if $j=1$ and $q=3$. Hence in all cases $\langle V_j^+,V_j^-\rangle \subseteq \mathcal{U}(\Z \MC)$ is a subgroup of finite index in $\Z(1-e)+\SL_{p^n}(\O)$. By Theorem~\ref{basisMC}, $U$ has finite index in $\mathcal{Z}(\mathcal{U}(\Z \MC))$ and therefore it contains a subgroup of finite index in the center of $\Z(1-e)+\GL_{p^n}(\O)$. Since the center of $\GL_{p^n}(\O)$ together with $\SL_{p^n}(\O)$  generate a subgroup of finite index in $\GL_{p^n}(\O)$, it follows that $\langle U,V^+, V^-\rangle$ contains a subgroup of finite index in the group of units of $\Z(1-e)+\Z \MC e$. Now the statement follows, since $V_j^+$ and $V_j^-$ correspond to upper and lower triangular matrices.
\end{proof}

\begin{remark}
 By well-known results, the hypothesis on $q$ if $p=2$ and $n=1$ can be dropped if we add some more units to the set of generators. Indeed, in this case $\MC=D_{2\cdot 3^m}$ and there is only one ``exceptional" component $M_2(\Q)$. By \cite[Lemma 22.10]{Sehgal1993}, the only additional thing to prove is that the set of generators of $\U(\Z \MC)$ project to a set of generators of $\SL_2(\Z)$. This is satisfied if we add the bicyclic units $1-(1-b)a(1+b),(1-(1-b)a(1+b))^{ba},1+(1-ba)a(1+ba)$ to the generating set (see proof of \cite[Theorem 23.1]{Sehgal1993} for more details).
\end{remark}

\renewcommand{\bibname}{References}
\bibliographystyle{model1-num-names}
\bibliography{references}

\end{document}